\def\Xint#1{\mathchoice
{\XXint\displaystyle\textstyle{#1}}%
{\XXint\textstyle\scriptstyle{#1}}%
{\XXint\scriptstyle\scriptscriptstyle{#1}}%
{\XXint\scriptscriptstyle%
\scriptscriptstyle{#1}}%
\!\int}
\def\XXint#1#2#3{{\setbox0=\hbox{$#1{#2#3}{%
\int}$ }
\vcenter{\hbox{$#2#3$ }}\kern-.6\wd0}}
\def\barint{\, \Xint -} 
\def\bariint{\barint_{} \kern-.4em \barint}
\def\bariiint{\bariint_{} \kern-.4em \barint}
\renewcommand{\iint}{\int_{}\kern-.34em \int} 
\renewcommand{\iiint}{\iint_{}\kern-.34em \int} 
\DeclareMathAlphabet{\mathcal}{OMS}{cmsy}{m}{n}
\theoremstyle{plain}
\newtheorem{theorem}{Theorem}[section]
\newtheorem{lemma}[theorem]{Lemma}
\theoremstyle{definition}
\newtheorem{remark}[theorem]{Remark}
\newcommand{\R}{\mathbb{R}}
\newcommand{\N}{\mathbb{N}}
\newcommand{\Z}{\mathbb{Z}}
\newcommand{\T}{\mathbb{T}}
\newcommand{\bP}{\mathbb{P}}
\newcommand{\supp}{\mathop{\mathrm{supp}}}
\newcommand{\p}{\partial}
\newcommand{\la}{\langle}
\newcommand{\ra}{\rangle}
\newcommand{\les}{\lesssim}
\newcommand{\ges}{\gtrsim}
\renewcommand{\:}{\colon}
\newcommand{\loc}{{\rm loc}}
\let\div\relax
\DeclareMathOperator{\div}{div}
\let\tilde\relax
\newcommand{\tilde}[1]{\widetilde{#1}}
\renewcommand{\L}{\bm{L}}
\newcommand{\B}{\bm{B}}
\newcommand{\G}{\bm{G}}
\renewcommand{\T}{\mathbb{T}}
\newcommand{\A}{\bm{A}}
\let\Re\relax
\DeclareMathOperator{\Re}{Re}
\numberwithin{equation}{section}
\setlist[enumerate]{leftmargin=*}
\title[]{Gluing non-unique Navier-Stokes solutions}
\author[Albritton]{Dallas Albritton} 
\address[Dallas Albritton]{School of Mathematics, Institute for Advanced Study, 1 Einstein Dr., Princeton, NJ 08540, USA}
\email{dalbritton@wisc.edu}
\author[Bru\'e]{Elia Bru\'e} 
\address[Elia Bru\'e]{School of Mathematics, Institute for Advanced Study, 1 Einstein Dr., Princeton, NJ 08540, USA}
\email{elia.brue@ias.edu}
\author[Colombo]{Maria Colombo}
\address[Maria Colombo]{Institute of Mathematics, EPFL SB, Station 8,  CH-1015 Lausanne, Switzerland }
\email{maria.colombo@epfl.ch}
\begin{document}
\begin{abstract}
We construct non-unique Leray solutions of the forced Navier-Stokes equations in bounded domains via gluing methods. This demonstrates a certain locality and robustness of the non-uniqueness discovered by the authors in~\cite{albritton2021non}.
\end{abstract}


\maketitle

\setcounter{tocdepth}{1}
\tableofcontents

\parskip   2pt plus 0.5pt minus 0.5pt

\section{Introduction}
\label{sec:intro}

In the recent work~\cite{albritton2021non}, we constructed non-unique Leray solutions of the Navier-Stokes equations in the whole space with forcing:
\begin{equation}
\label{eq:ns}
\tag{NS}
\begin{aligned}
    \p_t u + u \cdot \nabla u - \Delta u + \nabla p &= f \\
    \div u &= 0 \, .
\end{aligned}
\end{equation}
The non-unique solutions are driven by the extreme instability of a ``background" solution $\bar{u}$, which has a self-similar structure:
\begin{equation}
    \label{eq:background}
     \bar{u}(x,t)  = \frac{1}{\sqrt{t}} \bar{U} \left( \frac{x}{\sqrt{t}} \right) \, .
\end{equation}
In particular, the non-uniqueness ``emerges" from the irregularity at the space-time origin and is expected to be  local. However, while $\bar u$ is compactly supported, the non-uniqueness in~\cite{albritton2021non} 
involves another solution whose support is $\R^3 \times [0,T]$. Below, we demonstrate a certain locality and robustness of the non-uniqueness discovered in~\cite{albritton2021non} by gluing it into any smooth, bounded domain $\Omega \subset \R^3$ with no-slip boundary condition $u|_{\p \Omega} = 0$ and into the torus $\T^3 := \R^3 / (2\pi \Z)^3$, i.e., the fundamental domain  $[-\pi,\pi]^3$ with periodic boundary conditions.


\begin{theorem}[Non-uniqueness in bounded domains]
    \label{thm:introthm}
    Let $\Omega$ be a smooth, bounded domain in $\R^3$ or the torus $\T^3$. There exist $T>0$, $f \in L^1_t L^2_x(\Omega \times (0,T))$, and two distinct suitable Leray--Hopf solutions $u$, $\bar{u}$ to the Navier--Stokes equations
on $\Omega \times (0,T)$ with body force $f$, initial condition $u_0 \equiv 0$, and no-slip boundary condition. 
\end{theorem}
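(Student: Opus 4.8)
The plan is to transplant the non-uniqueness of \cite{albritton2021non} into $\Omega$ (the torus case being analogous and slightly simpler, with no boundary) by keeping the self-similar background $\bar u$ essentially untouched and gluing around it a spatially truncated copy of the second solution, correcting the truncation error by a forced perturbation. Since $\bar u(\cdot,t)$ is supported in a ball of radius $\sim\sqrt t$, after rescaling the domain we may assume $B_1\subset\subset\Omega$ and, shrinking $T$, that $\supp\bar u(\cdot,t)\subset\subset B_{1/4}$ for $t\in(0,T]$. Take $f:=\bar f|_\Omega$, where $\bar f$ is the body force of \cite{albritton2021non}; by the self-similar scaling of $\bar u$ one has $\|\bar f(t)\|_{L^2}\lesssim t^{-3/4}$, so $f\in L^1_tL^2_x$. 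With its $\R^3$ pressure restricted to $\Omega$, $\bar u$ solves \eqref{eq:ns} on $\Omega\times(0,T)$ with force $f$, data $0$ and the no-slip condition (it vanishes near $\partial\Omega$), it lies in $L^\infty_tL^2_x\cap L^2_t\dot H^1_x$, and being smooth it satisfies the global and local energy identities; hence it is a suitable Leray--Hopf solution. This is the first solution.

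For the second solution, write the $\R^3$ solution of \cite{albritton2021non} as $\bar u+w$, so that the perturbation solves $\partial_t w+\bar u\cdot\nabla w+w\cdot\nabla\bar u+w\cdot\nabla w-\Delta w+\nabla q=0$, $\div w=0$ on $\R^3\times(0,T)$ with $w(\cdot,0)=0$, and decays rapidly in space away from the origin, uniformly down to $t=0$: in self-similar variables $w$ lies on the unstable manifold of the linearized operator, which is constructed in a space of rapidly decaying functions, so that $\|w(t)\|_{L^2}\sim t^{\Re\lambda+1/4}$ (with $\Re\lambda>0$ the unstable eigenvalue) while $w$ and its derivatives are extremely small on $\{|x|\ge 1/2\}$ as $t\to 0^+$. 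Fix a cutoff $\chi\equiv 1$ on $B_{1/2}$ with $\supp\chi\subset\subset B_1$, and set $\psi^0:=\chi w+z$, where $z(\cdot,t):=\mathcal{B}(-\nabla\chi\cdot w(\cdot,t))$ and $\mathcal{B}$ is the Bogovskii operator on the annulus $A=B_1\setminus B_{1/2}$; then $\div\psi^0=0$, $\psi^0$ vanishes near $\partial\Omega$ and at $t=0$, and $\psi^0=w$ on $B_{1/2}$. A direct computation shows that $\psi^0$ satisfies the system solved by $w$, now posed on $\Omega$, with a forcing term $\tilde F$ supported in $\overline A\times(0,T]$; since $A$ lies at fixed distance from the origin, $\|\tilde F(t)\|_{L^2}$ is tiny as $t\to 0^+$, in particular $o(\|w(t)\|_{L^2})$ and small enough for the estimate below.

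Next I would absorb this error: let $\phi$ be a Leray--Hopf solution on $\Omega$, obtained by Galerkin approximation, of $\partial_t\phi+(\bar u+\psi^0)\cdot\nabla\phi+\phi\cdot\nabla(\bar u+\psi^0)+\phi\cdot\nabla\phi-\Delta\phi+\nabla q=-\tilde F$, $\div\phi=0$, with zero data and no-slip. The basic energy identity gives $\tfrac{d}{dt}\|\phi(t)\|_{L^2}^2\lesssim\|\nabla(\bar u+\psi^0)(t)\|_{L^\infty}\|\phi(t)\|_{L^2}^2+\|\tilde F(t)\|_{L^2}\|\phi(t)\|_{L^2}$, and although $\|\nabla(\bar u+\psi^0)(t)\|_{L^\infty}\lesssim t^{-1}$ is \emph{not} integrable at $t=0$ — precisely the mechanism behind non-uniqueness — the Gr\"onwall argument still closes because $\|\tilde F(t)\|_{L^2}$ is so small that it beats the weight $t^{-C}$ produced by the singular coefficient; this yields $\|\phi(t)\|_{L^2}=o(\|w(t)\|_{L^2})$ as $t\to 0^+$. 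Then $\psi:=\psi^0+\phi$ solves the system of $w$ on $\Omega$ with \emph{zero} forcing, zero data and no-slip, so $u:=\bar u+\psi$ is a suitable Leray--Hopf solution of \eqref{eq:ns} on $\Omega\times(0,T)$ with the same force $f$, data $0$ and no-slip condition as $\bar u$. Finally $u\neq\bar u$: on $B_{1/2}$ one has $\psi=w+\phi$, and since $\|\phi(t)\|_{L^2}=o(\|w(t)\|_{L^2})$ while $w\not\equiv 0$, $\psi$ does not vanish identically for small $t$.

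I expect the main obstacle to be exactly the interplay, in the estimate for $\phi$, between the non-integrable singular coefficient $\|\nabla\bar u(t)\|_{L^\infty}\sim t^{-1}$ and the smallness of the gluing error $\tilde F$: one must quantify how fast the unstable trajectory $w$ decays away from the origin, read off the resulting decay rate of $\tilde F$ from the weighted estimates of \cite{albritton2021non}, and verify that it dominates the Gr\"onwall weight $t^{-C}$, so that $\phi$ is genuinely of lower order than $w$ near $t=0$. Secondary, more routine points are: the time-dependence of the Bogovskii corrector near $t=0$; the Galerkin construction of $\phi$ with coefficients that blow up at $t=0$ (handled by the same a priori bound, e.g.\ by solving from level $t=\varepsilon$ and sending $\varepsilon\to 0$); and checking that $\bar u+\psi$ inherits the global and local energy inequalities, which is standard given that $\bar u+\psi^0$ is smooth and $\phi$ arises from a Galerkin scheme.
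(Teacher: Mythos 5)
Your setup for the first solution ($\bar u$ and $f$ restricted to $\Omega$) is exactly the paper's, and the overall strategy for the second solution (truncate the $\R^3$ perturbation $w$, fix the divergence, absorb the commutator error) is in the right spirit. But the absorption step contains a genuine gap, and it is precisely the difficulty the paper is organized around. Your Gr\"onwall inequality produces the weight $\exp\big(\int_s^t\|\nabla(\bar u+\psi^0)(\sigma)\|_{L^\infty}\,d\sigma\big)\approx (t/s)^{C}$ with $C=\|\nabla\bar U\|_{L^\infty}$, whereas the error $\tilde F$ decays only like $s^{\,a+O(1)}$: the trajectory on the unstable manifold decays in space only polynomially, $|\Phi(\xi,\tau)|\les e^{a\tau}\la\xi\ra^{-4}$ (this is the content of Lemmas~\ref{lemma:spacialdecay} and~\ref{lem:efndecay}; the eigenfunction is built from derivatives of the Oseen kernel and decays no faster than $\la\xi\ra^{-4}$), so on the fixed annulus $A$ one gains only the factor $(\sqrt t/|x|)^4\approx t^2$ over the bulk size $t^{a}$ of the profile. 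On the other hand, testing $\L_{\rm ss}$ against $U$ shows $a\leq\|\nabla\bar U\|_{L^\infty}-\tfrac14$, and in the construction of~\cite{albritton2021non} the gap $C-a$ is large and uncontrolled ($a$ is made large by making $\bar U$ large, and $\|\nabla\bar U\|_{L^\infty}$ grows at least as fast). Hence $\int_0^t (t/s)^C\|\tilde F(s)\|_{L^2}\,ds$ need not even converge, let alone be $o(t^{a+1/4})$. The remedy --- and the reason for the paper's architecture --- is to propagate the error not by energy/Gr\"onwall but by the semigroup $e^{\tau\L_{\rm ss}}$, whose sharp growth rate is $e^{(a+\delta)\tau}$ (Lemma~\ref{prop:semigroup}) because $a$ is the supremum of the real part of the spectrum; then a polynomial gain of order $t^{3/2}$ in the forcing suffices. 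Carrying this out forces the correction near the origin to be measured in similarity variables and in the weighted space $L^\infty_w$, while the part of the error living at scale $|x|\sim 1$ is handled by the Stokes semigroup on $\Omega$ --- i.e., one is led back to the coupled inner/outer system of integral equations, with the cutoff at the intermediate scale $t^\gamma$ and Lemma~\ref{lem:stokesdiv} playing the role of your Bogovskii corrector.

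A secondary caution: your statement that ``$w$ and its derivatives are extremely small on $\{|x|\geq 1/2\}$ as $t\to0^+$'' must be read as ``smaller by a fixed polynomial factor,'' not super-polynomially small. If the profile decayed, say, Gaussianly in $\xi$, your Gr\"onwall argument would close; but the available decay is only $\la\xi\ra^{-4}$, and even weighted bounds on $\nabla\Phi$ are not free (they require a parabolic-regularity upgrade of Lemma~\ref{lemma:spacialdecay}). These quantitative facts are exactly what the exponent $\kappa\approx\tfrac32$ and the rates $\alpha,\beta$ in~\eqref{eq:betadef} encode.
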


We assume a certain familiarity with the conventions of~\cite{albritton2021non}, although it will be convenient to recall the basics below. For $x\in \R^3$ and $t\in (0,+\infty)$, define the \emph{similarity variables}
\begin{equation}
    \xi = \frac{x}{\sqrt{t}} \, , \quad \tau = \log t \, .
\end{equation}
A velocity field $u$ and its \emph{similarity profile} $U$ are related via the transformation
\begin{equation}
    u(x,t) = \frac{1}{\sqrt{t}} U(\xi,\tau) \, .
\end{equation}
The pressure $p$, force $f$, and their respective profiles $P$, $F$ transform according to
\begin{equation}
    p(x,t) = \frac{1}{t} P(\xi,\tau) \, , \quad f(x,t) = \frac{1}{t^{3/2}} F(\xi,\tau) \, .
\end{equation} 
The Navier-Stokes equations in similarity variables are
\begin{equation}
    \label{eq:similaritynavierstokes}
\begin{aligned}
    \p_\tau U - \frac{1}{2} \left( 1 + \xi \cdot \nabla_\xi \right) U  - \Delta U + U \cdot \nabla U + \nabla P &= F \\
    \div U &= 0 \, .
    \end{aligned}
\end{equation}
Then $\bar{U} \in C^\infty_0(B_1)$ constructed in~\cite{albritton2021non} (see~\eqref{eq:background} above) is an unstable steady state of~\eqref{eq:similaritynavierstokes} with suitable smooth, compactly supported forcing term $\bar{F}$, and the non-unique solutions are trajectories on the unstable manifold associated to $\bar{U}$.

 In this paper, we take the following perspective. The force $f$ and one solution $\bar u$ are exactly the ones from~\cite{albritton2021non}. They are self-similar, smooth for positive times, and compactly supported inside the domain $\Omega$, which we assume contains the ball of radius $1/2$ centered at the origin. Each non-unique solution in~\cite{albritton2021non} constitutes then an ``inner solution" which lives at the self-similar scaling $|x| \sim t^{1/2}$, and this solution can be glued to an ``outer solution" (namely, $u \equiv 0$), which lives at the scaling $|x| \sim 1$. The boundary conditions are satisfied by the outer solution. 
 The solutions are glued by truncating on an intermediate scale $|x| \sim 1/10$. Let $\eta(x)$ be a suitable cut-off function with $\eta \equiv 1$ on $B_{1/9}$ and $\eta \equiv 0$ on $\R^3\setminus B_{1/7}$. Our main ansatz~is
 \begin{equation}
    \label{eq:ansatz}
 u = \bar{u} + \phi \eta + \psi \, ,
 \end{equation}
  where $\bar{u}$ is the compactly supported self-similar solution of the previous work, $\phi$ is the inner correction defined on the whole $\R^3$ (although only the values in $\supp \eta$ matter for the definition of $u$), and $\psi$ is the outer correction defined on the torus. Since $\phi$ is the inner correction, it will be natural to track its similarity profile $\Phi$ (we keep the lower and uppercase convention). We likewise decompose the pressure
  \begin{equation}
  p = \bar{p} + \pi \eta + q \, ,
  \end{equation}
  although $\bar{p} = 0$ from the construction in~\cite{albritton2021non}.

The PDE to be satisfied in $\Omega$ by 
$\phi$ and $\psi$ is
\begin{equation}
    \label{eq:iamsatisfied}
    \begin{split}   \partial_t(\phi\eta) &- \Delta (\phi\eta) +\bar{u} \cdot \nabla (\phi\eta) +\eta \phi\cdot \nabla \bar{u}+\eta \div(\eta \phi \otimes \phi ) +\bar{u} \cdot \nabla \psi+\psi \cdot \nabla \bar{u}\\
        &+ \partial_t\psi - \Delta \psi + \eta  \div(\phi\otimes \psi + \psi\otimes \phi) + \div(\psi\otimes \psi)  + \eta \phi (\phi \cdot \nabla \eta)
        \\& + \phi (\psi \cdot \nabla \eta) + \psi (\phi\cdot \nabla \eta)
        + \nabla (\pi \eta + q) = 0 \, ,
    \end{split}
\end{equation}
together with $\div (\phi \eta + \psi) = 0$.
We distribute the terms into an ``inner equation", which we think of as an equation for $\phi$ involving some terms in $\psi$, localized around the origin, and an ``outer equation", thought of as an equation for $\psi$. The inner and outer equations, when satisfied separately, imply that~\eqref{eq:iamsatisfied} is satisfied.


\subsection{Inner equation}
The inner equation has to be satisfied on the support of $\eta$, which is contained in $B_{1/7}$:
\begin{equation}\label{eq:inner eq}
\begin{split}
\partial_t \phi - \Delta \phi& + \bar{u} \cdot \nabla \phi + \phi \cdot \nabla \bar{u} + \div(\eta \phi \otimes \phi)
\\& \quad +  \div(\psi \otimes \phi + \phi \otimes \psi) + \bar{u} \cdot \nabla \psi + \psi \cdot \nabla \bar{u} + \nabla \pi  =0\, ,
\end{split}
\end{equation}
and it is coupled to the divergence-free condition 
\begin{equation}
    \div \phi = 0\, .
\end{equation}
We introduce the operator $\L_{\rm ss}$, i.e., the linearized operator of~\eqref{eq:similaritynavierstokes} around $\bar{U}$:
\begin{equation}
\label{eq:linearizedsimilaritynavierstokes}
    -\L_{\rm ss} \Phi = - \frac{1}{2} \left( 1 + \xi \cdot \nabla_\xi \right) \Phi  - \Delta \Phi + \bP \left( \bar{U} \cdot \nabla \Phi + \Phi \cdot \nabla \bar{U} \right) \, .
\end{equation}
In self-similar variables, we rewrite the cut-off $\eta(x) = N(\xi,\tau)$. We rewrite the inner equation~\eqref{eq:inner eq} as
\begin{equation}
    \label{eqn:inner}
    \begin{split}
    \partial_\tau \Phi -\L_{\rm ss} &\Phi + \Phi \cdot \nabla (\Phi N) 
    + 
    \div ( \tilde{N} \Psi \otimes \Phi + \tilde{N} \Phi \otimes \Psi)
       \\ & \quad 
    + \bar{U} \cdot \nabla \Psi + \Psi \cdot \nabla \bar{U}+ \nabla \Pi =0\, ,
    \end{split}
\end{equation}
where $\tilde{N}(\xi,\tau) = N(\xi/3,\tau)$. We now require that it is satisfied in the whole $\R^3$, not merely on the support of $N$.

\subsection{Outer equation}
Using that $(\bar{u} \cdot \nabla \eta) \phi = 0$ and $\partial_t \eta = 0$, as a consequence of our choice of $\eta$, we deduce the following system for the outer equation:
\begin{equation}
    \begin{cases}
    \label{eqn:outer}
    &\partial_t \psi  - \Delta \psi +  \div(\psi \otimes \psi) + (\psi \cdot \nabla \eta) \phi + (\phi \cdot \nabla \eta) \psi 
    \\  & \qquad
     - \phi \Delta \eta - 2 \nabla \phi \cdot \nabla  \eta  + \eta \phi(\phi \cdot \nabla \eta) + \pi \nabla \eta + \nabla q  =0 
    \\&
    \div \psi = - \nabla \eta \cdot \phi 
    \end{cases}
\end{equation}
The problem~\eqref{eqn:outer} is to be solved in $\Omega$ with the boundary condition $\psi|_{\p \Omega} = 0$.

\medskip

We now consider the PDEs~\eqref{eq:inner eq} and~\eqref{eqn:outer} as a system for $(\Phi,\psi)$. The two components will be controlled using two different linear operators, $\L_{\rm ss}$ and $\bP \Delta$.

In dividing the terms of~\eqref{eq:iamsatisfied} into the inner and outer equations, we put the ``boundary terms", i.e., terms involving derivatives of $\eta$, into the outer equation, whereas the we put the terms $\bar{U} \cdot \nabla \Psi$ and $\Psi \cdot \nabla \bar{U}$ into the inner equation.

Crucially, we expect that \emph{the boundary terms are small because solutions of the inner equation are well localized.} Consequently, \emph{$\psi$ decouples from $\phi$ as $t \to 0^+$}, and therefore the linear part of the system should be invertible.\footnote{One can compare this to the matrix
$\begin{bmatrix}
a & b \\
\varepsilon & d
\end{bmatrix}$ where $\varepsilon$ represents the boundary terms, $b$ represents the $\bar{U} \cdot \nabla \Psi + \Psi \cdot \nabla \bar{U}$ terms, and the diagonal elements $a$ and $d$ are $O(1)$. In fact, eventually we will see that $\psi$ decays faster than $\phi$ as $t \to 0^+$, so the terms corresponding to $b$ are small, and the whole system decouples.} For this to work, it is necessary to show that the boundary terms are negligible, which requires knowledge of the inner correction $\Phi$ in \emph{weighted spaces}.


With this knowledge, we solve the full nonlinear system via a fixed point argument. The details of the scheme will be discussed in Section~\ref{sec:integraleqns}.

\medskip

Our method is inspired by the parabolic ``inner-outer" gluing technique exploited in~\cite{davila2020singularity} to analyze bubbling and reverse bubbling in the two-dimensional harmonic map heat flow into $\mathbb{S}^2$. The reverse bubbling in~\cite{davila2020singularity} is also an example of gluing techniques applied to non-uniqueness, although its mechanism is quite different. It is worth noting that, in that setting, the harmonic map heat flow actually has a natural uniqueness class~\cite{Struwe1985}. 






We expect that Theorem~\ref{thm:introthm} may be extended in a number of ways. Our techniques extend with minimal effort to non-uniqueness centered at $k$ points. 
We expect that the conditionally non-unique solutions of Jia and \v{S}ver{\'a}k~\cite{jiasverakillposed} can also be glued.\footnote{For this, it may be necessary to assume that the self-similar solution is just barely unstable, as is done in the truncation procedure in~\cite{jiasverakillposed}. Typically, the background solution $\bar{u}$ must be cut in the gluing procedure, but we avoid this because in our setting $\bar{u}$ is already compactly supported.}  Finally, it would be interesting to glue the two-dimensional Euler constructions of~\cite{Vishik1,Vishik2} (see also~\cite{OurLectureNotes}) into the torus or bounded domains. This is likely to be more challenging than the present work, since the Euler equations are quasilinear and the construction of the unstable manifold more involved. We leave these and other extensions to future work.


\section{Preliminaries} 

Consider $p \in (1,+\infty)$ and $\Omega = \R^3, \T^3$, or a smooth, bounded domain in $\R^3$.

We define
\begin{equation}
L^p_\sigma(\Omega) := \overline{\{ \phi \in C^\infty_c(\Omega; \mathbb{R}^3)\, : \, \div \phi = 0\}}^{ L^p(\Omega;\R^3)} \, ,
\end{equation}
which
can be understood as the space of $L^p$ velocity fields with $\div \phi = 0$ on $\Omega$ and $\phi \cdot \nu = 0$ on $\partial \Omega$, where $\nu$ is the exterior normal to $\Omega$. See~\cite[Chapter III]{galdi} or~\cite[Lemma 1.4]{Tsaibook}. 
Notice that the boundary condition is vacuous when $\Omega = \R^3,\T^3$. 

There exists a bounded projection $\mathbb{P}: L^p(\Omega;\R^3) \to L^p_\sigma(\Omega)$ satisfying $\mathbb{P}\phi = \phi - \nabla \Delta^{-1}_N\div \phi$ for any $\phi \in C^\infty_c(\Omega; \mathbb{R}^3)$, where $\Delta_N$ is the Neumann Laplacian. This is the \emph{Leray projection}. By density of divergence-free test fields, it agrees across $L^p$ spaces and, in particular, with the extension of the $L^2$-orthogonal projection onto divergence-free fields; see~\cite[Chapter III]{galdi} or~\cite[Theorem 1.5]{Tsaibook}.  

\subsection{Linear instability} 

The following theorem provides an unstable background for the $3$D Navier-Stokes equations. We refer the reader to \cite{albritton2021non} for its proof.

\begin{theorem}[Linear instability]\label{thm:linear insta}
There exists a divergence-free vector field $\bar{U}\in C^\infty(\R^3;\R^3)$ with $\supp \bar{U} \subset B_1(0)$ such that the linearized operator $\L_{\rm ss} \: D(\L_{\rm ss}) \subset L^2_\sigma(\R^3) \to L^2_\sigma(\R^3)$ defined by
\begin{equation}
     - \L_{\rm ss} U = - \frac{1}{2} \left( 1 + \xi \cdot \nabla_\xi \right) U - \Delta U + \bP (\bar{U} \cdot \nabla U + U \cdot \nabla \bar{U}) \, ,
\end{equation}
where $D(\L_{\rm ss}) := \{ U \in L^2_\sigma : U \in H^2(\R^3), \,  \xi \cdot \nabla U \in L^2(\R^3) \}$,
has a maximally unstable eigenvalue~$\lambda$ with non-trivial smooth eigenfunction $\rho$ belonging to $H^k(\R^3)$ for all $k \geq 0$: 
\begin{equation}
    \L_{\rm ss} \rho = \lambda \rho \quad \text{ and } \quad a := \Re \lambda  = \sup_{z \in \sigma(\L_{\rm ss})} \Re z > 0 \, .
\end{equation}
\end{theorem}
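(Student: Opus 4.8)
The plan is to deduce the instability of $\L_{\rm ss}$ from an instability of the two-dimensional Euler equations, in the spirit of Vishik and of Friedlander--Strauss--Vishik: a sufficiently strong background shear produces an unstable eigenvalue for the inviscid linearization, and one then checks that this eigenvalue persists once the viscous term $-\Delta$ and the self-similar term $-\tfrac12(1+\xi\cdot\nabla_\xi)$ are switched back on. Concretely, I would start from Vishik's radially symmetric, rapidly decaying vorticity profile $\bar\omega=\bar\omega(r)$ on $\R^2$ for which the linearized $2$D Euler operator about the associated azimuthal velocity field, restricted to a fixed nonzero angular Fourier mode, has an eigenvalue with strictly positive real part; truncating $\bar\omega$ to a compactly supported profile is legitimate since an isolated unstable eigenvalue of finite multiplicity is stable under small, relatively compact perturbations of the profile. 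From this $2$D vortex one builds the three-dimensional profile $\bar U\in C^\infty(\R^3;\R^3)$ with $\supp\bar U\subset B_1$ as (roughly) the similarity profile of a vortex ring whose core is modeled on the truncated $\bar\omega$, carrying a large circulation; in the thin-core / large-amplitude limit, and after rescaling $\tau\mapsto\beta\tau$ by the corresponding large parameter $\beta\gg1$, the operator $\L_{\rm ss}$ becomes a perturbation of (a multiple of) the $2$D Euler linearization $\L_{\rm E}$ about $\bar\omega$, in which $-\Delta$ and $-\tfrac12(1+\xi\cdot\nabla_\xi)$ are formally lower order.

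\textbf{Spectral bookkeeping.} Next I would record the abstract spectral picture on $L^2_\sigma(\R^3)$. The operator $U\mapsto\Delta U+\tfrac12(1+\xi\cdot\nabla_\xi)U$ generates a $C_0$-semigroup of Ornstein--Uhlenbeck type whose essential spectrum lies in a fixed half-plane $\{\Re z\le c_0\}$ for some $c_0\in\R$; since $\bar U\in C^\infty_0(B_1)$, the drift term $U\mapsto\bP(\bar U\cdot\nabla U+U\cdot\nabla\bar U)$ is relatively compact, so $\L_{\rm ss}$ has the same essential spectrum, and in $\{\Re z>c_0\}$ its spectrum consists of isolated eigenvalues of finite algebraic multiplicity. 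A semigroup bound $\|e^{\tau\L_{\rm ss}}\|\le Ce^{\omega\tau}$ confines $\sigma(\L_{\rm ss})$ to $\{\Re z\le\omega\}$. Therefore, as soon as one exhibits a single eigenvalue $\lambda_\beta$ with $\Re\lambda_\beta>\max(0,c_0)$, the set $\{z\in\sigma(\L_{\rm ss}):\Re z\ge0\}$ is finite and nonempty, and an element of maximal real part is the desired maximally unstable eigenvalue $\lambda$ with $a=\Re\lambda=\sup_{z\in\sigma(\L_{\rm ss})}\Re z>0$.

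\textbf{Transferring the Euler eigenvalue.} It then remains to produce that one unstable eigenvalue. In the rescaled picture $\L_{\rm ss}=\beta\L_{\rm E}+R_\beta$, where $R_\beta$ collects $-\Delta$, the self-similar term, and the curvature corrections of the ring, Vishik's theorem supplies $\mu\in\sigma(\L_{\rm E})$ with $\Re\mu>0$, isolated and of finite multiplicity, with a smooth, well-localized eigenfunction $\rho_{\rm E}$ (well-localized thanks to the truncation of $\bar\omega$). I would then run a perturbation argument — either a Lyapunov--Schmidt reduction of the eigenvalue equation near $\rho_{\rm E}$, or a Rouch\'e-type count of zeros of a finite-dimensional reduced determinant assembled from the resolvent of the unperturbed Ornstein--Uhlenbeck operator — to conclude that for $\beta$ large $\L_{\rm ss}$ has an eigenvalue $\lambda_\beta$ with $\lambda_\beta/\beta\to\mu$, hence $\Re\lambda_\beta>0$ and in fact arbitrarily large. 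I expect this transfer to be the main obstacle: $-\Delta$ is not relatively bounded with respect to $\L_{\rm E}$, so ordinary analytic perturbation theory does not apply verbatim, and the argument must use the smoothness and compact support of $\rho_{\rm E}$ (so that $\Delta\rho_{\rm E}\in L^2_\sigma$ with a controlled norm) together with the compactness of the resolvent of the unperturbed self-similar operator, while ruling out a cancellation that would let the eigenvalue escape in the limit.

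\textbf{Regularity of the eigenfunction.} Finally, with $\lambda$ and $\rho$ in hand, the eigenrelation $\Delta\rho+\tfrac12(1+\xi\cdot\nabla_\xi)\rho-\bP(\bar U\cdot\nabla\rho+\rho\cdot\nabla\bar U)=-\lambda\rho$ is a linear stationary Stokes-type system with smooth coefficients — polynomially growing in the drift term, smooth and compactly supported in the $\bar U$ terms — so interior elliptic regularity, bootstrapped, gives $\rho\in C^\infty(\R^3)$, while the confining drift $\tfrac12\xi\cdot\nabla_\xi$ forces Gaussian-type decay of $\rho$ and all of its derivatives; hence $\rho\in H^k(\R^3)$ for every $k\ge0$ and, in particular, $\xi\cdot\nabla\rho\in L^2$, so $\rho\in D(\L_{\rm ss})$. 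Divergence-freeness is preserved at every step, because $\Delta$ and $1+\xi\cdot\nabla_\xi$ map divergence-free fields to divergence-free fields and $\bP$ projects onto $L^2_\sigma(\R^3)$.
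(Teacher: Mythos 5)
This theorem is not proved in the present paper: it is imported wholesale from~\cite{albritton2021non}, and the text explicitly refers the reader there for the proof. Measured against the argument in that reference, your outline follows the same route in all essentials: the instability is sourced from Vishik's unstable two-dimensional vortex, the profile is truncated to compact support, it is embedded as a compactly supported divergence-free field in $\R^3$ carrying a large amplitude parameter, $\L_{\rm ss}$ is viewed (after rescaling time by that parameter) as a perturbation of the 2D Euler linearization in which $-\Delta$ and $-\tfrac12(1+\xi\cdot\nabla_\xi)$ are formally lower order, the essential spectrum is confined to a left half-plane because the $\bar U$-terms are a relatively compact perturbation of the Ornstein--Uhlenbeck part, and maximality of $\Re\lambda$ follows because the unstable spectrum then consists of finitely many isolated eigenvalues of finite multiplicity. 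The remark following the theorem --- that $a$ can be made arbitrarily large --- is precisely the large-amplitude mechanism you describe.

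Two caveats. First, as a proof your text is a program rather than an argument: the two load-bearing steps are Vishik's spectral theorem (a legitimate citation) and the persistence of the unstable eigenvalue under the perturbation by $-\Delta$ and the self-similar drift. You correctly identify the latter as the main obstacle --- $-\Delta$ is not relatively bounded with respect to the Euler linearization, so standard analytic perturbation theory does not apply --- but you do not close it, and this is exactly where the real work in~\cite{albritton2021non} lies. Second, your claim that the drift forces \emph{Gaussian-type} decay of $\rho$ is wrong. Outside $\supp\bar U$ the eigenfunction solves $\lambda\rho-\tfrac12(1+\xi\cdot\nabla)\rho-\Delta\rho=\bP\div F$ with $F$ compactly supported, and the nonlocal Leray projector makes the right-hand side decay only like $\la\xi\ra^{-4}$; hence $\rho$ inherits only this polynomial rate. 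This is harmless for the $H^k$ conclusion, but it is not a cosmetic point for the present paper: Lemma~\ref{lem:efndecay} establishes precisely $\rho\in L^\infty_w=L^\infty_4$, and this sharp polynomial weight (not a Gaussian one) is what drives the smallness of the boundary terms in the gluing scheme.
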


The construction in~\cite{albritton2021non} allows $\bar{U}$ to be chosen to make $a$ arbitrarily large, and it will be convenient, though not strictly necessary, to enforce that $a \geq 10$.

We can now define
\begin{equation}\label{eq: U lin}
U^{\rm lin}(\cdot,\tau) = \Re (e^{\lambda \tau} \rho) \, ,
\end{equation}
a solution of the linearized PDE $\p_\tau U^{\rm lin} = \L_{\rm ss} U^{\rm lin}$,
 with maximal growth rate $a \geq 10$.

The following lemma, borrowed from \cite[Lemma 4.4]{albritton2021non}, provides sharp growth estimates on the semigroup $e^{\tau \L_{\rm ss}}$.

 \begin{lemma}\label{prop:semigroup}
	Let $\bar{U}$ be as in Theorem \ref{thm:linear insta}. Then, for any $\sigma_2 \geq \sigma_1 \geq 0$ and $\delta > 0$, it holds
	\begin{equation}\label{eq:reg}
		\| e^{ \tau \L_{\rm ss}} U \|_{H^{\sigma_2}} \les_{\sigma_1,\sigma_2,\delta} \tau^{-\frac{(\sigma_2-\sigma_1)}{2}} 
		e^{\tau (a + \delta)} \| U \|_{H^{\sigma_1}} \, ,
	\end{equation}
	for any $U \in L^2_\sigma \cap H^{\sigma_1}(\R^3)$.
\end{lemma}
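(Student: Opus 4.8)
The plan is to split $\L_{\rm ss}=\L_0+K$, where $\L_0:=\Delta+\tfrac12(1+\xi\cdot\nabla_\xi)$ is the linearized similarity heat operator and $KU:=-\bP(\bar U\cdot\nabla U+U\cdot\nabla\bar U)$, to establish \eqref{eq:reg} for $e^{\tau\L_0}$ by hand, to transfer the smoothing to $e^{\tau\L_{\rm ss}}$ through Duhamel, and to extract the sharp rate $a+\delta$ from the spectral bound of Theorem~\ref{thm:linear insta} by a compact-perturbation argument. For the model operator: on $L^2_\sigma$ the operator $\L_0$ preserves the divergence-free constraint and commutes with $\bP$, and the similarity change of variables $u(x,t)=t^{-1/2}U(xt^{-1/2},\log t)$ identifies $e^{\tau\L_0}$ with the heat flow, $e^{\tau\L_0}U=e^{\tau/2}\bigl(e^{(e^\tau-1)\Delta}U\bigr)(e^{\tau/2}\,\cdot\,)$. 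Combining this with the contraction property of the heat semigroup and $\dot H^s$-scaling one reads off $\|e^{\tau\L_0}\|_{L^2\to L^2}\le e^{-\tau/4}$ and, for $\sigma_2\ge\sigma_1\ge0$, $\|e^{\tau\L_0}U\|_{H^{\sigma_2}}\les_{\sigma_1,\sigma_2}\min(\tau,1)^{-(\sigma_2-\sigma_1)/2}e^{C(\sigma_2)\tau}\|U\|_{H^{\sigma_1}}$, so that $\L_0$ generates a $C_0$-semigroup on $L^2_\sigma$ with $\omega_0(\L_0)\le-\tfrac14$.

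For the perturbation and Duhamel step, I would use that, since $\bar U\in C^\infty_0(B_1)$, $K$ factors as the Leray projection applied to a first-order differential operator with coefficients supported in $B_1$; hence $\|Ke^{r\L_0}\|_{L^2\to L^2}\les r^{-1/2}$ (one derivative lost against the $\L_0$-smoothing), so a contraction argument on short time intervals based on $e^{\tau\L_{\rm ss}}=e^{\tau\L_0}+\int_0^\tau e^{(\tau-s)\L_0}Ke^{s\L_{\rm ss}}\,ds$ produces the $C_0$-semigroup $e^{\tau\L_{\rm ss}}$ on $L^2_\sigma$ with $\omega_0(\L_{\rm ss})<\infty$. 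Feeding the model estimates into this Duhamel identity and bootstrapping on $(0,1]$ then gives $\|e^{\tau\L_{\rm ss}}U\|_{H^{\sigma_2}}\les\tau^{-(\sigma_2-\sigma_1)/2}\|U\|_{H^{\sigma_1}}$ there; in particular $e^{\tau\L_{\rm ss}}$ maps $H^{\sigma_1}\to L^2_\sigma$ and $L^2_\sigma\to H^{\sigma_2}$ at any fixed positive time with controlled norms.

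For the sharp exponential rate: for every $s>0$ the operator $e^{s\L_0}$ maps $L^2$ into $H^2$, and, because $K$ has coefficients supported in $B_1$, the composition $Ke^{s\L_0}$ sends $L^2$ into $H^1$-functions supported in $B_1$, hence is compact on $L^2_\sigma$ by Rellich. Therefore $e^{\tau\L_{\rm ss}}-e^{\tau\L_0}=\int_0^\tau e^{(\tau-s)\L_{\rm ss}}Ke^{s\L_0}\,ds$ is a norm-convergent integral of compact operators, so it is compact, and the essential growth bounds agree: $\omega_{\rm ess}(\L_{\rm ss})=\omega_{\rm ess}(\L_0)\le\omega_0(\L_0)\le-\tfrac14<0$. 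By Theorem~\ref{thm:linear insta}, $\sup_{z\in\sigma(\L_{\rm ss})}\Re z=a>0>\omega_{\rm ess}(\L_{\rm ss})$, so every point of $\sigma(\L_{\rm ss})$ with real part larger than $\omega_{\rm ess}(\L_{\rm ss})$ is an isolated eigenvalue of finite algebraic multiplicity and the supremum $a$ is attained; the standard identity $\omega_0(\L_{\rm ss})=\max(\omega_{\rm ess}(\L_{\rm ss}),a)$ then gives $\omega_0(\L_{\rm ss})=a$, i.e. $\|e^{\tau\L_{\rm ss}}\|_{L^2\to L^2}\le C_\delta e^{\tau(a+\delta)}$ for all $\tau\ge0$. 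Finally, for $\tau\ge1$ I would write $e^{\tau\L_{\rm ss}}=e^{\L_{\rm ss}/2}\circ e^{(\tau-1)\L_{\rm ss}}\circ e^{\L_{\rm ss}/2}$: the outer factors map $H^{\sigma_1}\to L^2_\sigma$ and $L^2_\sigma\to H^{\sigma_2}$ with fixed constants, the middle factor contributes $C_\delta e^{(\tau-1)(a+\delta)}$, and $\tau^{-(\sigma_2-\sigma_1)/2}e^{\tau\delta}\ges1$; together with the bootstrap bound on $(0,1]$ this is \eqref{eq:reg}.

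The routine parts are the model estimate and the Duhamel bootstrap. The main obstacle is the last step, namely promoting the spectral bound of Theorem~\ref{thm:linear insta} to a semigroup growth bound: since $\L_0$ is neither self-adjoint nor analytic on $L^2(\R^3)$ — the drift $\xi\cdot\nabla$ has no weight to tame it — the analytic spectral mapping theorem is unavailable, and one must argue through the essential spectrum, which is precisely where the compact support of $\bar U$, making $K$ a genuine $\L_0$-compact perturbation, becomes indispensable. One could alternatively carry out this step by splitting off the finite-dimensional unstable eigenspace via a Riesz projection and controlling the complementary semigroup, but that reduces to the same essential-spectrum input.
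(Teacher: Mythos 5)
The paper does not prove this lemma; it is quoted verbatim from \cite[Lemma 4.4]{albritton2021non}, and your proposal reconstructs essentially the argument given there (and mirrored in this paper's proof of Lemma~\ref{lemma:spacialdecay}): explicit heat-flow representation of the model semigroup $e^{\tau\A}$, Duhamel bootstrap for short-time smoothing, the essential-growth-bound/compact-perturbation argument to convert the spectral bound $a$ into the semigroup rate $a+\delta$, and the factorization $e^{\tau\L_{\rm ss}}=e^{\L_{\rm ss}/2}e^{(\tau-1)\L_{\rm ss}}e^{\L_{\rm ss}/2}$ for large times. Your approach is correct. One small imprecision: the range of $Ke^{s\L_0}$ is not supported in $B_1$ because the Leray projector $\bP$ is nonlocal; the correct statement is that $V\mapsto\bar U\cdot\nabla V+V\cdot\nabla\bar U$ is compact from $H^2$ to $L^2$ by Rellich on $B_1$, and post-composition with the bounded operator $\bP$ preserves compactness, so your conclusion stands.
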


\subsection{Improved space decay}

For $\zeta \in \R$ and $p \in [1,+\infty]$, define $L^p_\zeta(\R^3)$ to be the space of $f \in L^p_\loc(\R^3)$ satisfying
\begin{equation}
    \| f \|_{L^p_\zeta} := \| \la \cdot \ra^{\zeta} f \|_{L^p} < + \infty \, ,
\end{equation}
where $\la \xi \ra = (1+|\xi|^2)^{1/2}$ is the Japanese bracket notation.
We further define
\begin{equation}
    L^p_w(\R^3) := L^p_{4}(\R^3) \, .
\end{equation}

\begin{lemma}
\label{lemma:spacialdecay}
Let $\zeta \in (3,4]$, $p \in (3,+\infty]$ and $\delta>0$. Then
\begin{equation}
    \| e^{\tau \L_{\rm ss}} \bP \div \|_{L^p_\zeta \to L^\infty_\zeta} \les_{\delta,\zeta,p}  \tau^{-(\frac{1}{2}+\frac{3}{2p})} e^{(a+\delta) \tau} \, .
\end{equation}
\end{lemma}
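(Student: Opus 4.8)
The plan is to prove the weighted bound by combining the unweighted semigroup estimate from Lemma~\ref{prop:semigroup} with a commutator argument that transfers the weight $\la \xi \ra^\zeta$ through the operator $e^{\tau \L_{\rm ss}} \bP \div$. First I would observe that since $\zeta \in (3,4]$ and $p \in (3,\infty]$, Sobolev embedding gives $H^{\sigma_2} \hookrightarrow L^\infty$ for any $\sigma_2 > 3/2$, so the target norm $L^\infty_\zeta$ can be reached once we control $\la \cdot \ra^\zeta e^{\tau\L_{\rm ss}} \bP \div f$ in a suitable $H^{\sigma_2}$; the parameter $\sigma_2$ will be chosen slightly above $3/2$, and the factor $\tau^{-(\sigma_2 - \sigma_1)/2}$ from Lemma~\ref{prop:semigroup} with $\sigma_1$ close to $1 - 3/p$ (accounting for the $\bP\div$, which costs one derivative but gains integrability) will produce the claimed exponent $-(\tfrac12 + \tfrac{3}{2p})$. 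The point is that $\bP\div : L^p_\zeta \to (\text{negative-order weighted Sobolev})$ with a gain that matches the $3/p$ in the exponent.

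The core step is the weighted estimate for the semigroup alone: I claim $\|\la \cdot \ra^\zeta e^{\tau \L_{\rm ss}} g\|_{H^{\sigma_2}} \lesssim e^{(a+\delta)\tau}(\tau^{-\cdots}) \|\la\cdot\ra^\zeta g\|_{(\cdot)}$ for $\zeta \le 4$. Here the key structural fact is that $\L_{\rm ss}$ differs from the rescaled heat operator $\tfrac12(1 + \xi\cdot\nabla) + \Delta$ only by the compactly supported terms $\bP(\bar U \cdot \nabla \Phi + \Phi \cdot \nabla \bar U)$, since $\supp \bar U \subset B_1$. Writing $v(\tau) = \la\xi\ra^\zeta e^{\tau\L_{\rm ss}} g$ and commuting the weight through each term, the drift term $\xi \cdot \nabla$ essentially commutes with $\la\xi\ra^\zeta$ up to a zeroth-order term (indeed $[\xi\cdot\nabla, \la\xi\ra^\zeta] = \zeta |\xi|^2 \la\xi\ra^{-2}\la\xi\ra^\zeta$, bounded by $\zeta \la\xi\ra^\zeta$), which is harmless and in fact of favorable sign for the scaling part; the Laplacian generates commutators $[\Delta, \la\xi\ra^\zeta]$ that are lower order in the weight (order $\zeta - 1$ and $\zeta - 2$) and can be absorbed by a Duhamel/Grönwall iteration against the already-known unweighted bound; and the $\bar U$ terms, being supported in $B_1$ where $\la\xi\ra^\zeta \sim 1$, contribute a commutator that is bounded uniformly. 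So one sets up the Duhamel formula $v(\tau) = e^{\tau \L_{\rm ss}} v(0) + \int_0^\tau e^{(\tau-s)\L_{\rm ss}} [\text{commutator terms}](s)\, ds$ — with the caveat that $e^{\tau\L_{\rm ss}}$ here must be understood after re-projecting, or one works at the level of the non-projected heat-type equation and projects at the end — and closes the estimate by Grönwall, the $e^{\delta\tau}$ slack absorbing the accumulated polynomial-in-$\tau$ losses from iterating.

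I expect the main obstacle to be the interaction between the weight $\la\xi\ra^\zeta$ and the Leray projection $\bP$, which is nonlocal: $\bP$ does not preserve weighted spaces cleanly, and $\la\xi\ra^\zeta \bP \div f$ is not simply $\bP \div (\la\xi\ra^\zeta f)$ plus a small error. The honest way around this is probably to write $\bP \div f = \div f - \nabla \Delta^{-1} \div\div f$ and treat the Riesz-transform-type piece $\nabla(-\Delta)^{-1}\div\div$ directly: this operator has a kernel decaying like $|x - y|^{-3}$, and one must check that convolution against it maps $L^p_\zeta \to L^\infty_\zeta$ with the right norm, i.e., that the weight $\la x\ra^\zeta$ is comparable to $\la y\ra^\zeta$ on the region carrying most of the mass of the kernel, using $\zeta > 3$ to get absolute convergence of the relevant integral $\int \la y\ra^{-\zeta}|x-y|^{-3+\epsilon}\,dy$ near $y = x$ and at infinity. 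The threshold $\zeta > 3$ and the restriction $\zeta \le 4$ (so that the heat-type commutators stay controllable and $L^p_w = L^p_4$ is the borderline case) are exactly what make this work; I would carry out the kernel estimate carefully and then feed the result into the Duhamel scheme above. Finally, I would need to handle the short-time singularity $\tau \to 0^+$, where the $\tau^{-(1/2 + 3/(2p))}$ blowup comes purely from the smoothing of $e^{\tau \L_{\rm ss}}$ acting on the one-derivative-rougher datum $\bP\div f$, and is integrable in $\tau$ near $0$ since $\tfrac12 + \tfrac{3}{2p} < 1$ for $p > 3$ — this integrability is presumably why the hypothesis $p > 3$ rather than $p \ge 3$ appears.
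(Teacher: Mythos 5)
Your overall architecture (reduce to the self-similar heat part, exploit that the $\bar U$-terms are compactly supported so the weight is harmless there, perturb by Duhamel) is the right skeleton, but there is a genuine gap at the step where the smoothing rate $\tau^{-(\frac12+\frac{3}{2p})}$ and the weight are supposed to be produced. You propose to peel off $\bP\div$ as a standalone operator, writing $\bP\div f = \div f - \nabla\Delta^{-1}\div\div f$ and estimating the second piece as a convolution against a kernel "decaying like $|x-y|^{-3}$''. This fails: $\nabla\Delta^{-1}\div\div$ is homogeneous of degree $+1$ in Fourier (its kernel is a distribution of homogeneity $-4$ in $\R^3$, not a locally integrable function), so it does not map $L^p_\zeta\to L^\infty_\zeta$ by itself, and the integral $\int\la y\ra^{-\zeta}|x-y|^{-3+\epsilon}\,dy$ you invoke is not the relevant one. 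The boundedness only appears for the \emph{composite} operator $e^{\tau\Delta}\bP\div$, whose kernel consists of derivatives of the Oseen tensor and satisfies the pointwise bound $|G(\xi)|\les\la\xi\ra^{-4}$ after self-similar rescaling. That decay rate is what the convolution lemma is applied to, it is exactly where the restriction $\zeta\le 4$ comes from (the weight cannot exceed the kernel decay), and it yields both the short-time rate $\tau^{-(\frac12+\frac{3}{2p})}$ and the preservation of $L^\infty_\zeta$ in one stroke. Your alternative route to $L^\infty$ via Lemma~\ref{prop:semigroup} and Sobolev embedding cannot substitute for this either, since that lemma takes $H^{\sigma_1}$ data and cannot see an $L^p_\zeta$ datum with $p$ large or infinite; the paper only uses it (with $\sigma_1=0$, $\sigma_2=2$) for the large-$\tau$ \emph{unweighted} bound after first smoothing into $L^2$.

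The second, smaller issue is that your commutator-plus-Gr\"onwall scheme for transferring the weight through $e^{\tau\L_{\rm ss}}$ is never needed and is harder to close than you suggest: one would have to bootstrap in the weight exponent and control $[\Delta,\la\xi\ra^\zeta]$ in spaces where the semigroup is not yet known to act. The paper avoids all of this by a single non-iterated Duhamel identity
\begin{equation*}
U(\cdot,\tau)=e^{\tau\A}\bP\div M-\int_0^\tau e^{(\tau-s)\A}\bP\div(\bar U\otimes U+U\otimes\bar U)\,ds\,,
\end{equation*}
where the source is supported in $B_1$, so its $L^\infty_\zeta$ norm is controlled by the \emph{unweighted} $\|U(\cdot,s)\|_{L^\infty}$, which is bounded separately beforehand; the integral then closes directly using $p>3$ for integrability of $s^{-(\frac12+\frac{3}{2p})}$ near $s=0$ (your reading of the role of $p>3$ is correct). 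To repair your proof, replace the standalone treatment of $\bP\div$ by the Oseen-kernel estimate for $e^{\tau\A}\bP\div$ in weighted spaces, and replace Gr\"onwall by this two-step (unweighted first, then weighted) perturbation argument.
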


\begin{remark}
    \label{rmk:wellposednessrmk}
For $M \in L^p(\R^3;\R^{3 \times 3})$ and $p \in [1,+\infty]$, the solution operator $e^{\tau \L_{\rm ss}} \bP \div M$ is easily shown to be well defined by standard arguments. Namely, consider the solution $u$ to the following PDE:
\begin{equation}
    \p_t u - \Delta u + \bP \div (\bar{u} \otimes u + u \otimes \bar{u}) = 0 \, , \quad u(\cdot,1) = \bP \div M \, .
\end{equation}
The mild solution theory of the above PDE can be developed using properties of the semigroup $e^{t \Delta} \bP \div$ (whose kernel consists of derivatives of the Oseen kernel, see~\eqref{eq:Oseen1}-\eqref{eq:Oseen2} below) by considering $\bP \div (\bar{u} \otimes u + u \otimes \bar{u})$ as a perturbation in Duhamel's formula. In particular, it is standard to demonstrate that, for all $T > 1$ and $t \in (1,T]$, we have
\begin{equation}
    \label{eq:thingweknow}
    \| u(\cdot,t) \|_{L^q} \les_{T,p,q} (t-1)^{-[\frac{1}{2} + \frac{3}{2}(\frac{1}{p}-\frac{1}{q})]} \| M \|_{L^p} \, ,
\end{equation}
for all $1 \leq p \leq q \leq +\infty$. Finally, we define $e^{\tau \L_{\rm ss}} \bP \div M := U \: \R^3 \times [0,+\infty) \to \R^3$ according to
\begin{equation}
    u(x,t) = \frac{1}{\sqrt{t}} U(\xi,\tau) \, .
\end{equation}
With this in mind, we focus below on growth estimates for the semigroup.
\end{remark}

\begin{proof}[Proof of Lemma~\ref{lemma:spacialdecay}]
To begin, we establish weighted estimates for the semigroup $e^{\tau \A} \bP \div$, where
\begin{equation}
    - \A := - \frac{1}{2} \left( 1 + \xi \cdot \nabla \right) - \Delta \, .
\end{equation}
For $M \in L^p_\zeta(\R^3;\R^{3\times3}) \subset L^2$, consider the solution $u : \R^3\times [1,+\infty) \to \R^3$ to
\begin{equation}
    \p_t u - \Delta u = 0 \, , \quad u(\cdot,1) = \bP \div M \, .
\end{equation}
We have the representation formula
\begin{equation}
    \label{eq:Oseen1}
    u(x,t) = g(\cdot,t-1) \ast M \, ,
\end{equation}
where $g$ is tensor-valued and consists of derivatives of the Oseen kernel (see, e.g.,~\cite[p. 80]{Tsaibook}),
\begin{equation}
    \label{eq:Oseen2}
    g = \frac{1}{t^2} G\left( \frac{x}{\sqrt{t}} \right) \, ,
\end{equation} satisfying the pointwise estimate
\begin{equation}
    |G(\xi)| \les \la \xi \ra^{-4} \, .
\end{equation}
Define $e^{\cdot \A} \bP \div M := U \: \R^3 \times [0,+\infty) \to \R^3$ according to
\begin{equation}
    u(x,t) = \frac{1}{\sqrt{t}} U(\xi,\tau) \, .
\end{equation}
Using the representation formula and elementary estimates for convolution (see Lemma~\ref{lem:convolutioninequality} and Remark~\ref{rmk:convolutionremark}), we have two estimates. First, we have the short-time estimate
\begin{equation}
    \label{eq:shorttimeest}
    \| u \|_{L^\infty_\zeta} \les_{\zeta,p} (t-1)^{-(\frac{1}{2}+\frac{3}{2p})}  \| M \|_{L^p_\zeta} \, , \quad t \in (1,e] \, ,
\end{equation}
which implies that
\begin{equation}
    \label{eq:Aest}
 \| U \|_{L^\infty_\zeta} \les_{\zeta,p} \tau^{-(\frac{1}{2}+\frac{3}{2p})} \| M \|_{L^p_\zeta} \, , \quad \tau \in (0,1] \, .
\end{equation}
Moreover, we have the long-time estimate
\begin{equation}
    \label{eq:Aestlong}
    \| U \|_{L^\infty_\zeta} \les_{\zeta,p} \| M \|_{L^p_\zeta} \, , \quad \tau \in [1,+\infty) \, .
\end{equation}
This completes the semigroup estimates for $e^{\tau \A} \bP \div$.

We now turn our attention to the growth estimate for $e^{\tau \L_{\rm ss}} \bP \div$. First, we prove
\begin{equation}
    \label{eq:alreadyknow}
    \| e^{\tau \L_{\rm ss}} \bP \div M \|_{L^\infty} \les_{\delta,p} \tau^{-(\frac{1}{2}+\frac{3}{2p})} e^{\tau(a+\delta)} \| M \|_{L^p} \, , \quad \tau > 0 \, .
\end{equation}
We already have this estimate for $\tau \in (0,2]$, see~\eqref{eq:thingweknow} in Remark~\ref{rmk:wellposednessrmk}, so we focus on $\tau \geq 2$.
This is done by splitting $e^{\tau \L_{\rm ss}} \bP \div = e^{(\tau-1) \L_{\rm ss}} \bP \circ e^{\L_{\rm ss}} \bP \div$, using estimate~\eqref{eq:thingweknow} (with $p=q=2$) for the operator $e^{\L_{ss}} \bP \div$, and using the growth estimate
\begin{equation}
    \| e^{\tau \L_{\rm ss}} \bP \|_{L^2 \to H^2} \les_\delta \tau^{-1} e^{\tau (a+\delta)} \, , \quad \tau > 0 \, ,
\end{equation}
from Lemma~\ref{prop:semigroup}, for the operator $e^{(\tau-1) \L_{\rm ss}} \bP$, along with Sobolev embedding $H^2 \subset L^\infty$ in dimension three. With~\eqref{eq:alreadyknow} in hand, we proceed with the desired $L^\infty_\zeta$ estimate. Define $U := e^{\tau \L_{\rm ss}} \bP \div M$ and write
\begin{equation}
    U(\cdot,\tau) = e^{\tau \A} \bP \div M - \int_0^\tau e^{(\tau-s) \A} \bP \div (\bar{U} \otimes U + U \otimes \bar{U} ) \, ds \, .
\end{equation}
We will combine the semigroup estimates~\eqref{eq:Aest} and~\eqref{eq:Aestlong} for $\A$ with~\eqref{eq:alreadyknow} and the fact that $\bar{U}$ is compactly supported. We end up with
\begin{equation}
\begin{aligned}
        \| U \|_{L^\infty_\zeta} &\les_{\delta,p} \max(\tau^{-(\frac{1}{2}+\frac{3}{2p})},1) \| M \|_{L^p_\zeta} 
        \\& \quad 
        +
        \int_0^\tau \max((\tau - s)^{-\frac{1}{2}},1) \| (\bar U \otimes U + U \otimes \bar U)(\cdot, s) \|_{L^\infty_\zeta }\, ds
        \\
        &\les_{\delta,p} \max(\tau^{-(\frac{1}{2}+\frac{3}{2p})},1) \| M \|_{L^p_\zeta} 
        +
        \int_0^\tau \max((\tau - s)^{-\frac{1}{2}},1) \| U (\cdot, s) \|_{L^\infty }\, ds
        \\
        &
        \les_{\delta,p} \max(\tau^{-(\frac{1}{2}+\frac{3}{2p})},1) \| M \|_{L^p_\zeta}
        + \int_0^\tau \max((\tau - s)^{-\frac{1}{2}},1) s^{-(\frac{1}{2}+\frac{3}{2p})}  e^{s(a+\delta)} \| M \|_{L^p} \, ds \\
        &\les_{\delta,p} \max(\tau^{-(\frac{1}{2}+\frac{3}{2p})},1) e^{\tau(a+\delta)} \, ds \, ,
\end{aligned}
\end{equation}
where we used that $p>3$.
This holds for all $\delta > 0$, completing the proof.
\end{proof}


\begin{lemma}
\label{lem:efndecay}
The eigenfunction $\rho$ in Theorem~\ref{thm:linear insta} belongs to $L^\infty_{w}(\R^3)$.
\end{lemma}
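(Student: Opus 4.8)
The plan is to prove that $\rho \in L^\infty_w = L^\infty_4$ by bootstrapping from the known regularity of $\rho$ (it lies in $H^k$ for all $k$, hence in $L^\infty$ by Sobolev embedding) using the eigenvalue equation $\L_{\rm ss} \rho = \lambda \rho$ rewritten in integral form. Since $\lambda \rho = \L_{\rm ss}\rho$ and $a = \Re\lambda > 0$, for any fixed $\tau_0 > 0$ we have the fixed-point-type identity
\begin{equation}
    \rho = e^{-\lambda \tau_0} e^{\tau_0 \L_{\rm ss}} \rho \, ,
\end{equation}
and more usefully the Duhamel representation obtained by splitting $\L_{\rm ss} = \A + \bP\,\bar U\cdot\nabla(\cdot) + \bP\,(\cdot)\cdot\nabla\bar U$: writing the lower-order term as a divergence $\bP\div(\bar U\otimes\rho + \rho\otimes\bar U)$, one gets for every $\tau > 0$
\begin{equation}
    \label{eq:rhoDuhamel}
    \rho = e^{-\lambda\tau}\Big( e^{\tau\A}\bP\,\rho \;-\; \int_0^\tau e^{(\tau - s)\A}\,\bP\div\big(\bar U\otimes\rho + \rho\otimes\bar U\big)\,e^{\lambda s}\,ds\Big)\, .
\end{equation}

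First I would record the weighted decay of the linear Stokes-type semigroup $e^{\tau\A}$ and of $e^{\tau\A}\bP\div$ in the $L^p_\zeta$ scale; the second is exactly the content of the proof of Lemma~\ref{lemma:spacialdecay} (via the Oseen-kernel bound $|G(\xi)|\lesssim\langle\xi\rangle^{-4}$), and the first follows from the same convolution estimates applied to the heat kernel in similarity variables, which decays like $\langle\xi\rangle^{-\text{(any power)}}$ after the self-similar rescaling. The key gain is that convolving a function in $L^\infty_4$ against a kernel with faster-than-$\langle\cdot\rangle^{-4}$ decay stays in $L^\infty_4$ with a controlled constant, and that $\bar U\otimes\rho + \rho\otimes\bar U$ is compactly supported (hence trivially in $L^p_\zeta$ for all $p,\zeta$) because $\bar U \in C^\infty_0(B_1)$. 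Second, I would run the bootstrap: start from $\rho\in L^\infty$ (equivalently $\langle\xi\rangle^0\rho\in L^\infty$), plug into \eqref{eq:rhoDuhamel}, and use that the forcing term is compactly supported so its Duhamel contribution lies in $L^\infty_\zeta$ for every $\zeta$, while the homogeneous term $e^{-\lambda\tau}e^{\tau\A}\bP\rho$ gains weight: if $\rho\in L^\infty_\zeta$ then $e^{\tau\A}\bP\rho\in L^\infty_{\zeta'}$ for $\zeta' = \min(\zeta+1,4)$ (the heat-type kernel upgrades the decay by one power at a time, capped at the kernel's own order $4$). Iterating finitely many times — at most four steps from $\zeta=0$ — lands $\rho$ in $L^\infty_4 = L^\infty_w$. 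One must choose $\tau$ large enough (or fixed) so that $e^{-\lambda\tau}$ together with the long-time semigroup bounds \eqref{eq:Aestlong} and the $e^{s(a+\delta)}$ growth inside the integral are all finite; since $a+\delta$ can be taken just above $a=\Re\lambda$, the factor $e^{-\lambda\tau}\int_0^\tau e^{(a+\delta)s}\,ds$ stays bounded, so no smallness is needed, only finiteness at a fixed $\tau$.

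The main obstacle is purely technical: verifying that the weighted semigroup $e^{\tau\A}\bP$ (not composed with a divergence) maps $L^\infty_\zeta \to L^\infty_{\min(\zeta+1,4)}$, i.e.\ that $\bP$ interacts well with the polynomial weights. Because $\bP$ is nonlocal, $\bP\rho$ need not decay as fast as $\rho$; however, $\bP\rho = \rho - \nabla\Delta^{-1}\div\rho$ and here $\div\rho = 0$, so in fact $\bP\rho = \rho$ on $\R^3$ and this difficulty evaporates entirely — $\rho$ is already divergence-free, so we may replace $\bP\rho$ by $\rho$ throughout \eqref{eq:rhoDuhamel}. What remains is the clean statement that convolution of an $L^\infty_\zeta$ function against the rescaled heat kernel $t^{-3/2}e^{-|\cdot|^2/4t}$ (with $t = 1 - e^{-\tau}$ say, coming from the change to similarity variables) produces an $L^\infty_{\zeta+1}$ function for $\zeta < $ any threshold, with the weight capped only when we also insert the divergence — and for the homogeneous term there is no divergence, so the only cap is the finite number of iterations we choose to perform, namely enough to reach $\zeta = 4$. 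I would present this as: (i) state and prove the one-step weighted smoothing lemma for $e^{\tau\A}$ and quote Lemma~\ref{lemma:spacialdecay}'s proof for $e^{\tau\A}\bP\div$; (ii) note $\bP\rho=\rho$; (iii) iterate \eqref{eq:rhoDuhamel} four times to conclude $\rho\in L^\infty_4$.
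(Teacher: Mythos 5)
Your Duhamel formula, the observation that $\bP\rho=\rho$, and the treatment of the forcing term (compactly supported, hence its Duhamel contribution against the $\langle\xi\rangle^{-4}$-decaying kernel of $e^{t\Delta}\bP\div$ lands in $L^\infty_4$) are all correct and close to what the paper does. The gap is in the homogeneous term. Your bootstrap runs on the claim that $e^{\tau\A}$ maps $L^\infty_\zeta$ to $L^\infty_{\min(\zeta+1,4)}$, i.e.\ that ``the heat-type kernel upgrades the decay by one power at a time.'' This is false: convolution with a nonnegative kernel of unit mass cannot improve polynomial decay, since for fixed $t$
\begin{equation}
    \bigl(G_t * \langle\cdot\rangle^{-\zeta}\bigr)(x) \ \geq\ \int_{|y|\leq 1} G_t(y)\,\langle x-y\rangle^{-\zeta}\,dy \ \gtrsim_t\ \langle x\rangle^{-\zeta}\,,
\end{equation}
and the self-similar rescaling $U(\xi)=\sqrt{t}\,u(\sqrt{t}\,\xi)$ does not change the decay exponent in $\xi$ either. (The exponent $4$ belongs to the derivative of the Oseen kernel, i.e.\ to $e^{t\Delta}\bP\div$, not to the plain heat kernel.) Consequently, at fixed $\tau$ your iteration only yields $\rho\in L^\infty_\zeta\Rightarrow\rho\in L^\infty_\zeta$: the homogeneous term $e^{-\lambda\tau}e^{\tau\A}\rho$ never decays faster than $\rho$ itself, and the bootstrap produces no gain.

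The paper sidesteps the homogeneous term entirely by exploiting the self-similar structure rather than iterating at fixed $\tau$: setting $h(x,t)=t^{\lambda-1/2}\rho(x/\sqrt{t})$ and $M(x,t)=t^{\lambda-1}F(x/\sqrt{t})$ with $-F=\bar U\otimes\rho+\rho\otimes\bar U$, one finds $\p_t h-\Delta h=\bP\div M$ with $h(\cdot,0)=0$ (using $\Re\lambda>1/2$), so that $\rho=h(\cdot,1)=\int_0^1 e^{(1-s)\Delta}\bP\div M(\cdot,s)\,ds$ is \emph{purely} a Duhamel integral of a compactly supported forcing; the $\langle\xi\rangle^{-4}$ kernel bound then gives $\rho\in L^\infty_w$ in one step, with the time integral converging because $\int_0^1(1-s)^{-1/2}s^{\Re\lambda-1}\,ds<\infty$. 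In your framework this corresponds to sending $\tau\to+\infty$ in your identity so that $e^{-\lambda\tau}e^{\tau\A}\rho$ disappears --- but you would then need to justify convergence of that term to zero in the \emph{weighted} norm starting only from $\rho\in L^2\cap L^\infty$, which is precisely the delicate point your fixed-$\tau$ iteration was meant to avoid and does not resolve.
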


\begin{proof}
The proof is akin to~\cite[Corollary 3.3]{albritton2021non}: $\rho \in D(\L_{\rm ss})$ solves
\begin{equation}\label{eqn:omega}
   \lambda \rho - \frac{1}{2}(1+\xi \cdot \nabla_\xi) \rho - \Delta \rho = \bP \div F
\end{equation}
where $- F = \bar{U} \otimes \rho + \rho \otimes \bar{U}$. Notably, local elliptic regularity implies that $\rho$ is smooth on the support of $\bar{U}$. Hence, $F \in L^\infty_w$. Next, we `undo' the similarity variables by defining
\begin{equation}
    h(x,t) = t^{\lambda-\frac{1}{2}} \rho \left(\frac{x}{\sqrt{t}}\right) \, , \quad 
    M(x,t) = t^{\lambda-1} F\left(\frac{x}{\sqrt{t}} \right) \, .
\end{equation}
Then
\begin{equation}
    \label{eq:definitionofh}
    \p_t h - \Delta h = \bP \div M \, , \quad h(\cdot,0) = 0 \, ,
\end{equation}
and we have the representation formula
\begin{equation}
    \rho = h(\cdot,1) = \int_0^1 e^{\Delta(1-s)} \bP \div M(\cdot,s) \, ds \, ,
\end{equation}
which yields (see~\eqref{eq:shorttimeest})
\begin{equation}
    \| \rho \|_{L^\infty_w} \les \int_0^1 (1-s)^{-\frac{1}{2}} \| M(\cdot,s) \|_{L^\infty_w} \, ds \les \int_0^1 (1-s)^{-\frac{1}{2}} s^{\Re \lambda - 1} \, ds \| F \|_{L^\infty_w} < +\infty
\end{equation}
since $\Re \lambda > 0$. Here, we used that $\| f(x/\ell) \|_{L^\infty_w} \leq \|f\|_{L^\infty_w}$ for $\ell \in (0,1]$. This completes the proof.
\end{proof}

\subsection{Stokes equations in bounded domains}
We now turn our attention to the linear theory for the outer equation. We begin with semigroup theory for the Stokes equations, see \cite[Sections 2 and 5]{Hieber2016} and~\cite[Chapter 5]{Tsaibook}.



\begin{lemma}[Stokes in bounded domains]
\label{lem:stokessemigroup}
Let $p \in (1,+\infty)$ and $\Omega \subset \R^3$ be a smooth, bounded domain. Define
\begin{equation}
    D(A) := W^{2,p} \cap W^{1,p}_0 \cap L^p_\sigma(\Omega)
\end{equation}
and the Stokes operator
\begin{equation}
    A = \bP \Delta : D(A) \to L^p_\sigma(\Omega) \, .
\end{equation}  Then the Stokes operator $A$ generates an analytic semigroup $(e^{t A})_{t \geq 0}$, and we have, for all $p \in (1,+\infty)$ and $q \in [p,+\infty]$, the smoothing estimates
\begin{equation}
    \| e^{t A} \bP \|_{L^p \to L^q} + t^{\frac{1}{2}} \| e^{t A} \bP \div \|_{L^p \to L^q} \les t^{\frac{3}{2}(\frac{1}{q}-\frac{1}{p})} \, .
\end{equation}
\end{lemma}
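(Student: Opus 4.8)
The plan is to import from \cite[Sections 2, 5]{Hieber2016} and \cite[Chapter 5]{Tsaibook} the standard functional-analytic facts about the Stokes operator on a bounded smooth domain and to deduce the two smoothing bounds from them. Concretely, I would use: (i) $-A$ is sectorial on $L^p_\sigma(\Omega)$ for every $p\in(1,\infty)$, with compact resolvent and spectrum contained in $\{\Re z<0\}$, so that $A$ generates a bounded analytic semigroup obeying $\norm{A^\alpha e^{tA}}_{L^p_\sigma\to L^p_\sigma}\les_\alpha t^{-\alpha}$ for $\alpha\ge0$ and $t>0$, together with exponential decay for $t\ge1$ coming from the spectral gap (so that the stated polynomial rates are automatic on $[1,+\infty)$); (ii) the identification $D(A^{1/2})=W^{1,p}_0\cap L^p_\sigma(\Omega)$, whence $\norm{\nabla v}_{L^p}\les\norm{A^{1/2}v}_{L^p}$; (iii) the fact that the $L^2$-adjoint of $A$ on $L^p_\sigma(\Omega)$, under the identification $L^p_\sigma(\Omega)^*=L^{p'}_\sigma(\Omega)$ (legitimate by the cross-scale consistency of $\bP$ recorded above), is the Stokes operator $A'=\bP\Delta$ on $L^{p'}_\sigma(\Omega)$; and (iv) the embedding $D(A^\alpha)\hookrightarrow W^{2\alpha,p}(\Omega)$ in the non-exceptional range of $\alpha$.

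For the bound on $e^{tA}\bP$ with $p\le q<\infty$, I would set $\alpha:=\tfrac32(\tfrac1p-\tfrac1q)$ and estimate $\norm{e^{tA}\bP f}_{L^q}\les\norm{A^\alpha e^{tA}\bP f}_{L^p}\les t^{-\alpha}\norm f_{L^p}=t^{\frac32(\frac1q-\frac1p)}\norm f_{L^p}$, using (iv), the Sobolev embedding $W^{2\alpha,p}\hookrightarrow L^q$, (i), and boundedness of $\bP$ on $L^p$; to keep $\alpha$ in the unexceptional range one reduces to small integrability gaps by iterating the semigroup property, filling in intermediate exponents by interpolating the target space. The endpoint $q=+\infty$ is the first place the chain breaks, since $W^{2\alpha,p}\hookrightarrow L^\infty$ requires $2\alpha>3/p$ strictly while the natural exponent is exactly $2\alpha=3/p$; I would get around this by fixing $q_0\in(3,\infty)$, noting $\norm{\nabla e^{sA}\bP f}_{L^{q_0}}\les\norm{A^{1/2}e^{sA}\bP f}_{L^{q_0}}\les s^{-1/2}\norm{e^{(s/2)A}\bP f}_{L^{q_0}}$ by (ii) and (i), and then applying the Gagliardo--Nirenberg inequality $\norm v_{L^\infty}\les\norm{\nabla v}_{L^{q_0}}^{3/q_0}\norm v_{L^{q_0}}^{1-3/q_0}$ (valid for $q_0>3$ on fields in $W^{1,q_0}_0$) to $v=e^{sA}\bP f$; the powers of $s$ combine to exactly $s^{-3/(2p)}$, which is the claimed rate.

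For the bound on $e^{tA}\bP\div$, I would first give meaning to $e^{tA}\bP\div M$ for $M\in L^p(\Omega;\R^{3\times3})$ by duality: for divergence-free $\varphi\in C^\infty_c(\Omega;\R^3)$ set $\la e^{tA}\bP\div M,\varphi\ra:=-\int_\Omega M:\nabla e^{tA'}\varphi\,dx$, which makes sense because $e^{tA'}\varphi\in D(A')\subset W^{1,p'}_0(\Omega)$ by (iii) (and which agrees with the classical object when $\div M\in L^p$, cf.\ Remark~\ref{rmk:wellposednessrmk}). Then $|\la e^{tA}\bP\div M,\varphi\ra|\le\norm M_{L^p}\norm{\nabla e^{tA'}\varphi}_{L^{p'}}\les\norm M_{L^p}\,t^{-1/2}\norm{e^{(t/2)A'}\varphi}_{L^{p'}}$ by (ii) applied to $A'$, and for $p\le q<\infty$ we have $q'\le p'$, so the first part of the argument applied to the Stokes semigroup on $L^{p'}_\sigma$ gives $\norm{e^{(t/2)A'}\varphi}_{L^{p'}}\les t^{\frac32(\frac1q-\frac1p)}\norm\varphi_{L^{q'}}$; taking the supremum over such $\varphi$ (which control, up to a constant, the $L^q$ norm of the divergence-free field $e^{tA}\bP\div M$) yields $\norm{e^{tA}\bP\div M}_{L^q}\les t^{-1/2+\frac32(\frac1q-\frac1p)}\norm M_{L^p}$. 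The endpoint $q=+\infty$ then follows by factoring $e^{tA}\bP\div=e^{(t/2)A}\circ\big(e^{(t/2)A}\bP\div\big)$ and combining the $L^p\to L^p$ case just obtained with the $L^p\to L^\infty$ bound on $e^{(t/2)A}$ from the first part. Multiplying through by $t^{1/2}$ in every case gives the stated estimate.

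Everything outside the two endpoints is routine bookkeeping with the imported semigroup estimates; the two steps that require a small idea are precisely the endpoints $q=+\infty$ — handled by the Gagliardo--Nirenberg argument for $e^{tA}\bP$ and by composition for $e^{tA}\bP\div$ — and the correct interpretation of $\bP\div$ on merely $L^p$ data, supplied by the duality pairing against the adjoint Stokes semigroup. I do not anticipate any serious obstacle beyond these.
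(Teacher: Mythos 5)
Your argument is correct, and it reconstructs precisely the standard proof that the paper does not spell out but delegates to its citations (\cite[Sections 2 and 5]{Hieber2016}, \cite[Chapter 5]{Tsaibook}): sectoriality plus $0\in\rho(A)$ give $\|A^{\alpha}e^{tA}\|\lesssim t^{-\alpha}$, Giga's identifications $D(A^{1/2})=W^{1,p}_0\cap L^p_\sigma$ and $D(A^{\alpha})\hookrightarrow W^{2\alpha,p}$ give the $L^p$--$L^q$ smoothing, and the $\bP\div$ bound follows by duality with the adjoint Stokes semigroup. You have also correctly identified and handled the only two delicate points — the $q=+\infty$ endpoint (via Gagliardo--Nirenberg, whose exponents you check combine to exactly $s^{-3/(2p)}$) and the meaning of $e^{tA}\bP\div$ on $L^p$ data (via the dual pairing, consistent with Remark~\ref{rmk:wellposednessrmk}) — so I see no gap.
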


The function $u(x,t) = (e^{tA} u_0)(x)$ solves the Stokes equations with no-slip boundary conditions
\begin{equation}
    \partial_t u - \Delta u + \nabla \pi  = 0 \, ,
    \qquad
    u(\cdot,t) = 0 \, \, \, \text{on $\partial \Omega$}\, ,
\end{equation}
for any $u_0 \in L^p_\sigma(\Omega)$. The boundary conditions are built into the domain of the operator, and $e^{t A}: L^p_\sigma \to D(A)$ for any $t>0$.

\medskip

To solve the Stokes equations with non-zero divergence, we use the following lemma due to~\cite[Theorem 4]{farwig2006new}.
\begin{lemma}[Stokes with inhomogeneous divergence]
    \label{lem:stokesdiv}
Let $T > 0$ and $\Omega \subset \R^3$ be a smooth, bounded domain.
For $p \in (3,+\infty)$, and $r \in (1,+\infty)$, consider $h \in L^r_t L^p_x(\Omega \times (0,T))$ with zero mean: $\int_\Omega h(x,t) \, dx = 0$ for a.e. $t \in (0,T)$.

Then there exists a unique very weak solution $u \in L^r_t L^p_x(\Omega \times (0,T))$ to the following Stokes problem in $\Omega \times (0,T)$:
\begin{equation}
\left\lbrace
\begin{aligned}
    \p_t u - \Delta u + \nabla \pi &= 0 \\
    \div u &= h \\
    u|_{\p \Omega \times (0,T)} &= 0 \\
    u(\cdot,0) &= 0 \, ;
    \end{aligned}
    \right.
\end{equation}
that is, for all divergence-free $w \in C^1_c([0,T);(C^2 \cap C_0)(\bar{\Omega}))$, we have
\begin{equation}\label{eq:weak formulation}
    \int_0^T \int_\Omega u (-\p_t - \Delta) w \, dx \, dt = 0
\end{equation}
and $\div u = h$ in the sense of distributions on $\Omega \times (0,T)$.
Moreover, $u$ satisfies the estimate
\begin{equation}
    \| u \|_{L^r_t L^p_x(\Omega \times (0,T))} \les_{\Omega,r,p} \| h \|_{L^r_t L^p_x (\Omega \times (0,T))} \, .
\end{equation}
\end{lemma}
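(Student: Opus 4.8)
The plan is to construct $u$ by duality, realizing it as the element of $L^r_t L^p_x = (L^{r'}_t L^{p'}_x)^*$ that pairs the datum $h$ against the \emph{pressure} of the adjoint (backward) Stokes problem; this is the route of~\cite{farwig2006new}. First I would set up the adjoint problem: given $F \in L^{r'}_t L^{p'}_x(\Omega \times (0,T))$, let $(W,\Pi)$ solve $-\p_t W - \Delta W + \nabla \Pi = F$, $\div W = 0$ in $\Omega \times (0,T)$, with $W|_{\p\Omega} = 0$, terminal condition $W(\cdot,T) = 0$, and normalization $\int_\Omega \Pi(\cdot,t)\,dx = 0$. Since $A = \bP\Delta$ generates an analytic semigroup on $L^{p'}_\sigma(\Omega)$ (Lemma~\ref{lem:stokessemigroup}) and, classically, has maximal $L^{r'}$--$L^{p'}$ regularity on the bounded domain $\Omega$ (Giga--Sohr, Solonnikov) — the backward problem being the time reversal of the forward one — I get $\|\p_t W\|_{L^{r'}_t L^{p'}_x} + \|\Delta W\|_{L^{r'}_t L^{p'}_x} \les \|F\|_{L^{r'}_t L^{p'}_x}$. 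Then $\nabla \Pi = F + \p_t W + \Delta W$ lies in $L^{r'}_t L^{p'}_x$, and the Poincaré--Wirtinger inequality on the bounded connected domain $\Omega$ gives the \emph{pressure estimate} $\|\Pi\|_{L^{r'}_t L^{p'}_x} \les_{\Omega,r,p} \|F\|_{L^{r'}_t L^{p'}_x}$.

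With this in hand, since $\int_\Omega h(\cdot,t)\,dx = 0$ the quantity $\int_0^T\!\!\int_\Omega h\, \Pi(F)$ is independent of the normalization of $\Pi$, and $F \mapsto -\int_0^T\!\!\int_\Omega h\,\Pi(F)$ is a bounded linear functional on $L^{r'}_t L^{p'}_x$ of norm $\les_{\Omega,r,p} \|h\|_{L^r_t L^p_x}$. By reflexivity ($1<r,p<\infty$) this functional is represented by a unique $u \in L^r_t L^p_x$ obeying $\int_0^T\!\!\int_\Omega u \cdot F = -\int_0^T\!\!\int_\Omega h\,\Pi(F)$ for all $F$, with $\|u\|_{L^r_t L^p_x} \les_{\Omega,r,p} \|h\|_{L^r_t L^p_x}$, which is the asserted bound. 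It then remains to identify $u$ as the very weak solution. Testing with $F = \nabla\phi$, $\phi \in C^\infty_c(\Omega \times (0,T))$ — for which the adjoint solution is $(W,\Pi) = (0,\phi)$ — gives $\int u\cdot\nabla\phi = -\int h\,\phi$, i.e.\ $\div u = h$ in $\mathcal D'(\Omega\times(0,T))$. Testing with $F = (-\p_t - \Delta)w$ for divergence-free $w \in C^1_c([0,T);(C^2\cap C_0)(\bar{\Omega}))$ — for which, by uniqueness for the backward problem, the adjoint solution is $(W,\Pi) = (w,0)$ — gives $\int_0^T\!\!\int_\Omega u(-\p_t - \Delta)w = 0$, which is~\eqref{eq:weak formulation}; the fact that the test class permits $w(\cdot,0)\neq 0$ and a non-vanishing conormal derivative on $\p\Omega$ is precisely what encodes $u(\cdot,0) = 0$ and $u|_{\p\Omega} = 0$. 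For uniqueness, the difference $v$ of two solutions is divergence free and annihilates $(-\p_t-\Delta)w$ for all admissible solenoidal $w$; given arbitrary $F$, I solve the backward problem, write $F = (-\p_t-\Delta)W + \nabla\Pi$, approximate $W$ by the test class, and use $\div v = 0$ to drop $\int v\cdot\nabla\Pi$, concluding $\int_0^T\!\!\int_\Omega v\cdot F = 0$ for all $F$, hence $v = 0$.

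The main obstacle is the pressure estimate of Step~1, which rests on the maximal $L^{r'}_t$--$L^{p'}_x$ regularity of the Stokes operator on a bounded domain; this is classical but is genuinely the heart of the matter. Everything else is bookkeeping together with routine density arguments needed to legitimize the integrations by parts for fields that are only $L^p$ in space and time. I would also note that the hypothesis $p>3$ is not needed for this lemma — any $p\in(1,\infty)$ works — and is imposed only for compatibility with the applications elsewhere in the paper.
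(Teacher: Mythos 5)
The paper does not actually prove this lemma: it is quoted from \cite[Theorem 4]{farwig2006new}, and your duality construction --- represent $u \in L^r_tL^p_x = (L^{r'}_tL^{p'}_x)^*$ by pairing $h$ against the pressure of the backward Stokes problem, with the pressure bound supplied by maximal $L^{r'}_t$--$L^{p'}_x$ regularity plus Poincar\'e--Wirtinger --- is precisely the proof strategy of that reference. The existence half and the a priori estimate are correct as you describe them, including the observation that the mean-zero hypothesis on $h$ is exactly what makes the pairing insensitive to the pressure normalization, and the identification steps (testing with $F=\nabla\phi$ and with $F=(-\p_t-\Delta)w$, using uniqueness of the backward problem in the maximal-regularity class) are sound. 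Your side remark that $p>3$ is inessential for this purely linear statement with zero boundary and initial data is fair; the restriction matters for the trace spaces and nonlinear estimates elsewhere.

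The one step with a genuine gap is uniqueness. You drop $\int_\Omega v\cdot\nabla\Pi\,dx$ ``because $\div v=0$,'' but the divergence condition in the statement is only distributional on $\Omega\times(0,T)$, i.e.\ tested against compactly supported $\phi$; to integrate $v\cdot\nabla\Pi$ by parts against $\Pi\in W^{1,p'}(\Omega)$ you also need the normal trace $v\cdot\nu$ (well defined in $W^{-1/p,p}(\p\Omega)$ since $v$ and $\div v$ lie in $L^p$) to vanish. In \cite{farwig2006new} this is built into the definition of a very weak solution: the divergence identity is required for all $\phi\in W^{1,p'}(\Omega)$, which encodes $u\cdot\nu=0$ on $\p\Omega$, and uniqueness is proven in that class. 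With the formulation as literally stated here, you must either first recover $v\cdot\nu=0$ from \eqref{eq:weak formulation} --- plausible, since the admissible test fields $w$ have non-vanishing $\Delta w\cdot\nu$ on $\p\Omega$ and hence see the harmonic-gradient part of $v$, but this needs an actual argument --- or adopt the stronger divergence condition as part of the definition, as the cited reference does. Note this issue is invisible in the periodic case treated in Section~\ref{subsec:stokes periodic}, where $\div u=0$ distributionally does imply $u=\bP u$; it is specific to the bounded domain.
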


\begin{remark}
    \label{rmk:veryweaksoltuionremark}
    The initial condition $u(\cdot, 0) = 0$ is understood ``modulo gradients''.
    Moreover, it can be proven (cf. ~\cite[Theorem 4, Remark 3]{farwig2006new}) that $A^{-1}\mathbb{P}u\in C([0,T);L^p_\sigma(\Omega))$ and $A^{-1}\mathbb{P}u(\cdot,0)=0$. Notably, \emph{uniqueness holds} in the above class of very weak solutions, which makes the notion a useful generalization. 
\end{remark}



\subsection{Stokes equations in the periodic domain}\label{subsec:stokes periodic}



On the torus $\T^3:= \R^3/(2\pi \mathbb{Z})^3$, the Stokes equations can be solved by means of the heat semigroup, since the Stokes operator $A$ in $L^p_\sigma(\T^3)$, $p\in (1,+\infty)$, coincides with
\begin{equation}\label{eq:Delta}
    \Delta : W^{2,p}\cap L^p_\sigma(\T^3) \to L^p_\sigma(\T^3) \, .
\end{equation}
Hence, the associated Stokes semigroup $(e^{t A})_{t\ge 0}$ coincides with the heat semigroup and enjoys the smoothing estimates
\begin{equation}\label{eq:Stokes est torus}
    \| e^{t A} \bP \|_{L^p \to L^q} + t^{\frac{1}{2}} \| e^{t A } \bP \div \|_{L^p \to L^q} \les t^{\frac{3}{2}(\frac{1}{q}-\frac{1}{p})} \, ,
\end{equation}
for all $p \in (1,+\infty)$ and $q \in [p,+\infty]$.

\medskip

The Stokes equations with non-zero divergence,
\begin{equation}\label{eq:Stokes div}
\left\lbrace
\begin{aligned}
    \p_t u - \Delta u + \nabla \pi &= 0 \\
    \div u &= h \\
    u(\cdot,0) &= 0 \, ,
    \end{aligned}
    \right.
\end{equation}
admit an explicit solution
\begin{equation}
    \label{eq:theformulaiwant}
    u = \nabla \Delta^{-1} h \, ,
\end{equation}
provided $h$ satisfies the compatibility condition $\int_{\T^3} h(x,t)\, dx = 0$ for a.e. $t\in (0,T)$. The solution is in the very weak sense, that is, $\div u = h$ in the sense of distributions, and, for all $w \in C^1_c([0,T);C^2(\T^3))$, we have
\begin{equation}
\label{eq:weakform}
  \int_0^T \int_{\T^3} u(-\partial_t w - \Delta w) \, dx \, dt = 0 \, .
\end{equation}
As in Remark~\ref{rmk:veryweaksoltuionremark}, the initial condition is only ``modulo gradients".



It is immediate to check that
\begin{equation}\label{eq:psidiv torus}
    \| u \|_{L^r_t L^p_x(\T^3 \times (0,T))} \les_{p} \| h \|_{L^r_t L^p_x (\T^3 \times (0,T))} \, ,
\end{equation}
for any $r\in [1,\infty]$ and $p\in (1,\infty)$.

Moreover, there is uniqueness when $u \in L^r_t L^p_x(\T^3 \times (0,T))$. That is, necessarily $u$ is given by~\eqref{eq:theformulaiwant}. Indeed, if $\div u = 0$, then $u = \bP u$, and~\eqref{eq:weakform} simply asserts that $u$ solves the heat equation with zero initial condition.


\subsection{Weighted pressure estimates}

To estimate the boundary term $\pi \nabla \eta$ in~\eqref{eqn:outer}, where $\pi$ is the ``inner pressure", we require estimates for the singular integral operator $(-\Delta)^{-1} \div \div$ in weighted spaces. Notably, $(-\Delta)^{-1} \div \div F =  \bm{R} \otimes \bm{R} : F$, where $F : \R^3 \to \R^{3\times 3}$ is a tensor and $\bm{R} = (R_1,R_2,R_3)$ is the vector of Riesz transforms $R_i$, whose kernels are $c_3 \xi_i/|\xi|^{4}$.

For $F \in L^1(\R^3)$ compactly supported in $B_R$ with $R > 0$, we evidently have
\begin{equation}
	\label{eq:compactsupportest}
| (-\Delta)^{-1} \div \div F | \lesssim \langle \xi \rangle^{-3} \| F \|_{L^1(B_R)} \, , \quad |\xi| \geq 2R \, .
\end{equation}

For $F \in L^p_w(\R^3)$ with $p \in (1,+\infty)$ and $R \geq 2$, we require the estimate
\begin{equation}
	\label{eq:weightedlppressureest}
\| (-\Delta)^{-1} \div \div F \|_{L^p(B_{10 R} \setminus B_R)} \lesssim_p R^{-3+\frac{3}{p}} \| F \|_{L^p_w} \, .
\end{equation}
We split $F = F \mathbf{1}_{B_{20 R} \setminus B_{R/2}} + F (1 - \mathbf{1}_{B_{20 R} \setminus B_{R/2}})$. Then, in the near field, we have
\begin{equation}
	\label{eq:nearfield}
\| (-\Delta)^{-1} \div \div F\mathbf{1}_{B_{20 R} \setminus B_{R/2}} \|_{L^p(B_{10 R} \setminus B_R)} \lesssim_p R^{-4} \| F \|_{L^p_w} \, ,
\end{equation}
whereas, whenever $\xi \in B_{10 R} \setminus B_R$, we have the contribution
\begin{equation}
| (-\Delta)^{-1} \div \div F (1 - \mathbf{1}_{B_{20 R} \setminus B_{R/2}}) | \lesssim (\langle \cdot \rangle^{-3} \ast |F|)(\xi) \lesssim_p \langle \xi \rangle^{-3} \| F \|_{L^p_w} \, ,
\end{equation}
as in Remark~\ref{rmk:convolutionremark}, from the far field. Hence,
\begin{equation}
	\label{eq:farfield}
\| (-\Delta)^{-1} \div \div F (1 - \mathbf{1}_{B_{20 R} \setminus B_{R/2}}) \|_{L^p(B_{10 R} \setminus B_R)} \lesssim_p R^{-3+\frac{3}{p}} \| F \|_{L^p_w} \, ,
\end{equation}
and the estimate follows by combining~\eqref{eq:nearfield} and~\eqref{eq:farfield}. In practice, this estimate will sometimes be coupled with the embedding $L^\infty_{8}(\R^3) \subset L^p_w(\R^3)$.

\section{The integral equations}
\label{sec:integraleqns}


In what follows $\Omega$ is either a smooth, bounded domain or the periodic box $\T^3$.
For $\bar \tau \in \R$, $\bar t>0$, and $\alpha, \beta>0$, we define the norms
\begin{equation}\label{eq:X}
    \| \Phi \|_{X^\alpha_{\bar \tau}}:= \sup_{\tau \le \bar \tau} e^{-\tau \alpha} \| \Phi(\cdot, \tau)\|_{L^\infty_w}
\end{equation}
\begin{equation}\label{eq:Y}
    \| \psi \|_{Y^\beta_{ \bar t}} := \sup_{s \in (0, \bar t)} s^{-\beta} \| \psi \|_{L^r_t L^p_x(\Omega \times (0,s))} \, ,
\end{equation}
where $r,p \gg 1$ will be fixed later. The function spaces $X^{\alpha}_{\bar{\tau}}$ and $Y^\beta_{\bar{t}}$ consist of $C((-\infty,\bar{\tau}];L^\infty_w(\R^3))$ and measurable functions, respectively, with finite norm.
Let
\begin{equation}
    Z^{\alpha,\beta}_{\bar t} := X^\alpha_{\bar \tau}\times Y^\beta_{\bar t}
\end{equation}
endowed with the norm
\begin{equation}
    \| (\Phi, \psi) \|_{Z^{\alpha,\beta}_{\bar t}}
    = \| \Phi \|_{X^{\alpha}_{\bar \tau}} + \|\psi \|_{Y^\beta_{\bar t}}\, .
\end{equation}
We drop the dependence on $\bar \tau$ from $Z^{\alpha,\beta}_{\bar t}$ since we always assume that $\bar \tau = \log \bar t$.








We use the decomposition
\begin{equation}
	\label{eq:decomp}
	\Phi = \Phi^{\rm lin} + \Phi^{\rm per} \, ,
\end{equation}
where
\begin{equation}
    \Phi^{\rm lin}(\cdot, \tau) =  U^{\rm lin}(\cdot, \tau) = {\rm Re}(e^{\lambda \tau} \rho) 
\end{equation}
was defined in \eqref{eq: U lin}.

Our goal is to solve a set of integral equations for $\Phi^{\rm per}$ and $\psi$:
\begin{equation}\label{eq:integral eq}
    (\Phi^{\rm per},\psi) = L[(\Phi^{\rm per},\psi)] + B[(\Phi^{\rm per},\psi)] + G
\end{equation}
where $L = (L_i,L_o)$,  $B = (B_i,B_o)$, and $G = (G_i,G_o)$ will be specified below. The integral equations will be a reformulation of the inner and outer equations introduced in Section~\ref{sec:intro}.

We want to show that, for an appropriate choice of the parameters $\alpha$ and $\beta$, defined in~\eqref{eq:betadef}, and $r,p \gg 1$, there exists $\bar t>0$ such that the integral equations admit a unique solution $(\Phi^{\rm per},\psi) \in Z^{\alpha,\beta}_{\bar t}$. In what follows, we allow the implied constants to depend on $r$, $p$, and $a$.

We now determine the above operators, beginning with the inner integral equation.

\subsection{Inner integral equation}

Recall that the inner PDE is
\begin{equation}
    \label{eqn:innersecondround}
    \begin{split}
    \partial_\tau \Phi -\L_{\rm ss} &\Phi + \Phi \cdot \nabla (\Phi N) 
    + \div ( \tilde{N} \Phi \otimes \Psi  + \tilde{N} \Psi \otimes \Phi)
       \\ & \quad 
    + \bar{U} \cdot \nabla \Psi + \Psi \cdot \nabla \bar{U}+ \nabla \Pi =0\, ,
    \end{split}
\end{equation}
which must be satisfied on the support of $N$, and which we seek to solve in the whole space. With the decomposition~\eqref{eq:decomp},
we can derive an equation for $\Phi^{\rm per}$. The equation is
\begin{equation}\label{eq:Phi2}
    \partial_\tau  \Phi^{\rm per} - \L_{\rm ss}\Phi^{\rm per} = \bP \div \L[(\Phi^{\rm per},\psi)] + \bP \div \B[(\Phi^{\rm per},\psi)] + \bP \div \G \, ,
\end{equation}
where $\L$ is a linear operator in $(\Phi^{\rm per},\psi)$ given by
\begin{equation}\label{eq:Li}
\begin{split}
-\L[(\Phi^{\rm per},\psi)]  &=
 \underbrace{N\Phi^{\rm lin}  \otimes \Phi^{\rm per} + N\Phi^{\rm per}  \otimes \Phi^{\rm lin}}_{=: -\L_1[(\Phi^{\rm per},\psi)]} + \underbrace{\tilde{N} \Phi^{\rm lin} \otimes \Psi  + \tilde{N} \Psi \otimes \Phi^{\rm lin}}_{=: -\L_2[(\Phi^{\rm per},\psi)]}  \\
&\qquad + \underbrace{\bar{U} \otimes \Psi + \Psi \otimes \bar{U}}_{=: -\L_3[(\Phi^{\rm per},\psi)]} \, .
\end{split}
\end{equation}
The operator $\B[(\Phi^{\rm per},\psi)] = \B[(\Phi^{\rm per},\psi),(\Phi^{\rm per},\psi)]$ is induced by the bilinear form
\begin{equation}\label{eq:Bi}
-\B[(\Phi^{\rm per}_1,\psi_1),(\Phi^{\rm per}_2,\psi_2)]
 = 
\underbrace{N \Phi^{\rm per}_2  \otimes \Phi^{\rm per}_1}_{=: -\B_1[\Phi^{\rm per}_1,\Phi^{\rm per}_2]}
+ \underbrace{\tilde N \Phi^{\rm per}_1 \otimes \Psi_2 + \tilde{N} \Psi_2 \otimes  \Phi^{\rm per}_1}_{=: -\B_2[(\Phi^{\rm per}_1,\psi_1),(\Phi^{\rm per}_2,\psi_2)]}   \, .
\end{equation}
We finally have
\begin{equation}\label{eq:Gi}
-\G = N\Phi^{\rm lin} \otimes \Phi^{\rm lin}   \, .
\end{equation}
The associated integral operators are
\begin{equation}
\label{eq:lidef}
L_i[(\Phi^{\rm per},\psi)] = \int_{-\infty}^\tau e^{(\tau-s)\L_{\rm ss}} \bP \div \L[(\Phi^{\rm per},\psi)](\cdot,s) \, ds
\end{equation}
\begin{equation}
\label{eq:bidef}
B_i[(\Phi^{\rm per},\psi)] = \int_{-\infty}^\tau e^{(\tau-s)\L_{\rm ss}} \bP \div \B[(\Phi^{\rm per},\psi)](\cdot,s) \, ds
\end{equation}
\begin{equation}
\label{eq:gidef}
G_i = \int_{-\infty}^\tau e^{(\tau-s)\L_{\rm ss}} \bP \div \G(\cdot,s) \, ds \, .
\end{equation}

\subsection{Outer integral equation}

Let $\psi^{\rm div}[\Phi]$ be the solution of the Stokes equations with inhomogeneous divergence: 
When $\Omega$ is a smooth, bounded domain, we define $\psi^{\rm div}[\Phi]$ as in Lemma~\ref{lem:stokesdiv} with $h= - \nabla \eta \cdot \phi$. In the periodic setting, we set
\begin{equation}
    \psi^{\rm div}[\Phi] = - \nabla \Delta^{-1} (\nabla \eta \cdot \phi) \, ,
\end{equation}
see the discussion in Section~\ref{subsec:stokes periodic}.

Recall that the outer PDE is posed on $\Omega$ and reads
\begin{equation}
    \begin{cases}
     &\partial_t \psi  - \Delta \psi +  \div(\psi \otimes \psi) + (\psi \cdot \nabla \eta) \phi + (\phi \cdot \nabla \eta) \psi 
    \\  & \qquad
    - \phi \Delta \eta - 2 \nabla \phi \cdot \nabla  \eta  + (\phi \cdot \nabla \eta)\eta \phi +  \pi \nabla \eta + \nabla q  =0 
    \\&
    \div \psi = - \nabla \eta \cdot \phi \, .
    \end{cases}
\end{equation}
It will be convenient to rewrite, for each component $\phi_i$ of the vector field $\phi$,
\begin{equation}
    \nabla \phi_i \cdot \nabla \eta = \div (\phi_i \nabla \eta) - \phi_i \Delta \eta \, ,
\end{equation}
to keep everything in divergence form:
\begin{equation}
    \begin{cases}
    \label{eqn:outereqn}
    &\partial_t \psi  - \Delta \psi +  \div(\psi \otimes \psi) + (\psi \cdot \nabla \eta) \phi + (\phi \cdot \nabla \eta) \psi 
    \\  & \qquad
    + \phi \Delta \eta - 2 \div(\phi \otimes \nabla \eta) + (\phi \cdot \nabla \eta)\eta \phi + \pi \nabla \eta + \nabla q  =0 
    \\&
    \div \psi = - \nabla \eta \cdot \phi \, .
    \end{cases}
\end{equation}
The PDE is supplemented with the boundary condition $\psi|_{\p \Omega} = 0$. {The inner pressure $\pi$, which appears in the boundary term $\pi \nabla \eta$, is given by its similarity profile
\begin{equation}
\Pi = (-\Delta)^{-1} \div \div (\bar{U} \otimes \Phi^{\rm per} + \Phi^{\rm per} \otimes \bar{U} - \L[(\Phi^{\rm per},\psi)] - \B[(\Phi^{\rm per},\psi)] - \G ) \, .
\end{equation} 
Hence, we can rewrite it in physical variables as
\begin{equation}
	\pi = (-\Delta)^{-1} \div \div (\bar{u} \otimes \phi^{\rm per} + \phi^{\rm per} \otimes \bar{u} - {\bm \ell}[(\Phi^{\rm per},\psi)] - {\bm b}[(\Phi^{\rm per},\psi)] - {\bm g} ) \, ,
\end{equation}
where $\bm{\ell}$, $\bm{b}$, and $\bm{g}$ will represent $\bm{L}$, $\bm{B}$, and $\bm{G}$ in physical variables as opposed to similarity variables.
}

The integral equation for $\psi$ is
\begin{equation}
\begin{split}
    \psi = \psi^{\rm div}[\Phi] &-  \int_0^t e^{(t-s) A} \bP [\phi \Delta \eta  - 2\div (\phi \otimes \nabla \eta) + (\phi\cdot \nabla \eta)\eta \phi + \pi \nabla \eta ](\cdot,s) \, ds 
    \\& 
    - \int_0^t e^{(t-s) A} \bP [\div (\psi \otimes \psi) + (\psi \cdot \nabla \eta) \phi + (\phi \cdot \nabla \eta)\psi](\cdot,s) \, ds \, .
\end{split}
\end{equation}
We rewrite it as
\begin{equation}
    \label{eq:outerintegraleqn}
    \psi = L_o[(\Phi^{\rm per},\psi)] + B_o[(\Phi^{\rm per},\psi)] + G_o\, ,
\end{equation}
where $L_o$ acts linearly on $(\Phi^{\rm per},\psi)$ according to
\begin{equation}
    \label{eq:Lodef}
   \begin{split}
    &L_o[(\Phi^{\rm per},\psi)] =  \psi^{\rm div}[\Phi^{\rm per}] - \int_0^t e^{(t-s) A} \bP [\phi^{\rm per}  \Delta \eta  - 2 \div ( \phi^{\rm per} \otimes \nabla \eta)](\cdot,s) \, ds
    \\ &\quad  
     - \int_0^t e^{(t-s) A} \bP[ (\phi^{\rm lin}\cdot \nabla \eta)(\eta \phi^{\rm per} + \psi) +  ((\eta\phi^{\rm per} + \psi)\cdot \nabla \eta) \phi^{\rm lin}](\cdot,s) \, ds \\
     &\quad  - \int_0^t e^{(t-s) A} \bP [ (-\Delta)^{-1} \div \div ( \bar{u} \otimes \phi^{\rm per} + \phi^{\rm per} \otimes \bar{u} - \bm{\ell}[(\Phi^{\rm per},\psi)] )(\cdot,s) \nabla \eta] \, ds \, . 
\end{split}
\end{equation}
The operator $B_o$ is induced by the bilinear form
\begin{equation}
    \label{eq:Bodef}
\begin{split}
    &B_o[(\Phi^{\rm per}_1,\psi_1),(\Phi^{\rm per}_2,\psi_2)]  = - \int_0^t e^{(t-s) A} \bP [ \eta \phi_1^{\rm per}(\phi^{\rm per}_2\cdot \nabla \eta)  + \div (\psi_1 \otimes \psi_2) ](\cdot,s) \, ds 
    \\& \quad
    - \int_0^t e^{(t-s) A} \bP [ (\psi_1 \cdot \nabla \eta) \phi^{\rm per}_2  + (\phi_1^{\rm per}\cdot \nabla \eta)\psi_2 ](\cdot,s) \, ds \\
    &\quad  - \int_0^t e^{(t-s) A} \bP [ (-\Delta)^{-1} \div \div ( -\bm{b}[(\Phi^{\rm per}_1,\psi_1),(\Phi^{\rm per}_2,\psi_2)] )(\cdot,s) \nabla \eta] \, ds \, . 
\end{split}
\end{equation}
and finally,
\begin{equation}
    \label{eq:Godef}
    \begin{split}
    G_o = \psi^{\rm div}[\Phi^{\rm lin}] &- \int_0^t e^{(t-s) A} \bP [\phi^{\rm lin}  \Delta \eta  - 2 \div( \phi^{\rm lin}\otimes \nabla \eta) + \eta \phi^{\rm lin}(\phi^{\rm lin}\cdot \nabla \eta) ](\cdot,s) \, ds \\
    & - \int_0^t e^{(t-s) A} \bP [ (-\Delta)^{-1} \div \div ( -\bm{g}(\cdot,s) ) \nabla \eta] \, ds \, . 
    \end{split}
\end{equation}

\subsection{Elementary estimates}
We have the following elementary estimate for every $\psi \in Y^\beta_{\bar{t}}$. From now on, suppose that $\bar{t} \leq 1$. Let $\beta' \in (0,\beta)$. Then (extending $\psi$ by zero in time as necessary)
\begin{equation}
    \label{eq:tminusbetathing}
    \begin{split}
    \| t^{-\beta'} \psi \|_{L^r_t L^p_x(\Omega \times (0,\bar{t}))}  & \les \left( \sum_{k \leq 0} 2^{-k \beta' r} \| \mathbf{1}_{(2^{k-1},2^k)} \psi \|_{L^r_t L^p_x(\Omega \times (0,\bar{t}))}^r \right)^{\frac{1}{r}}
    \\& 
    \les \left( \sum_{k \leq 0} 2^{(\beta-\beta') k r} \right)^{\frac{1}{r}} \| \psi \|_{Y^\beta_{\bar{t}}} 
    \\&
    \les \| \psi \|_{Y^\beta_{\bar{t}}} \, ,
    \end{split}
\end{equation}
where the implied constants depend on $\beta,\beta',r$. Hence,
\begin{equation}
    \label{eq:weightoninside}
    \| e^{\tau (-\beta'-\frac{1}{2}+\frac{3}{2p}+\frac{1}{r})} \Psi \tilde N \|_{L^r_\tau L^p_\xi(\R^3 \times (-\infty,\bar{\tau}))} \les \| \psi \|_{Y^\beta_{\bar{t}}} \, ,
\end{equation}
where the $1/r$ arises from the change of measure $e^{\tau} \, d\tau = dt$.

Meanwhile, we have
\begin{equation}
    \| \phi(\cdot,t) \|_{L^\infty} = t^{-\frac{1}{2}} \| \Phi(\cdot,\tau) \|_{L^\infty} \, .
\end{equation}
Since $ \supp \nabla \eta \subset \{ \frac{1}{9} \leq |x| \leq \frac{1}{7} \}$, 
\begin{equation}\label{eq:boundary est}
    \| \phi(\cdot,t)  \|_{L^\infty(\supp \nabla \eta)} \les t^{\frac{3}{2}} \| \Phi(\cdot,\tau) \|_{L^\infty_w} \, .
\end{equation}

\section{Outer estimates}

We begin with the outer estimates. Crucially, we will see that the boundary terms from $\Phi^{\rm lin}$ will limit the decay rate $\beta$ of $\psi$.

Except in the $\psi^{\rm div}$ terms which correct the divergence, it will be convenient to work with pointwise estimates in time and use the observation that, for functions~$f$,
\begin{equation}
    \label{eq:holderforf}
    \| f \|_{L^r(0,t)} \leq t^{\frac{1}{r}} \| f \|_{L^\infty(0,t)} \, .
\end{equation}
Let
\begin{equation}
    \kappa := \kappa(r) = \frac{1}{r} + \frac{3}{2}  \, .
\end{equation}
This exponent will appear in the decay rates $\alpha$ and $\beta$. The $1/r$ term will be seen to come from~\eqref{eq:holderforf}. 
We recognize the exponent in~\eqref{eq:boundary est} as $\kappa-1/r$.

We then define
\begin{equation}
    \label{eq:betadef}
    \beta := \kappa + a - \frac{1}{8} \, , \quad \alpha := \kappa + a \, .
\end{equation}
Finally, we recall the estimate from Lemma~\ref{lem:efndecay},
\begin{equation}\label{eq:philin est}
     \| \Phi^{\rm lin}(\cdot,\tau) \|_{L^\infty_w} \les e^{\tau a} \, ,
     \quad \text{for any $\tau\in \R$}\, .
\end{equation}

\subsection{Estimate on $G_o$~\eqref{eq:Godef}}
We begin with the divergence term, which will already have the worst contribution. It is estimated using Lemma~\ref{lem:stokesdiv}, \eqref{eq:psidiv torus} (depending on whether $\Omega$ is a bounded or a periodic domain), \eqref{eq:boundary est} and \eqref{eq:philin est}:
\begin{equation}
    \label{eq:divterm1}
    \begin{aligned}
    \| \psi^{\rm div}[\Phi^{\rm lin}] \|_{L^r_t L^p_x(\Omega \times (0,t))} &\les \| \nabla \eta \cdot \phi^{\rm lin} \|_{L^r_t L^p_x(\Omega \times (0,t))} \\
    &\les t^{\kappa} \| \Phi^{\rm lin} \|_{L^\infty_\tau L^\infty_w(\R^3 \times (-\infty,\log t))} 
    \\&
    \les t^{\kappa+a} \, .
    \end{aligned}
\end{equation}
The remaining non-pressure terms are estimated using either Lemma~\ref{lem:stokessemigroup} or \eqref{eq:Stokes est torus}:
\begin{equation}
    \label{eq:gootherterms}
\begin{aligned}
    &\left\| \int_0^t e^{(t-s) A} \bP [\phi^{\rm lin} \Delta \eta  - 2 \div( \phi^{\rm lin}\otimes \nabla \eta) + \eta \phi^{\rm lin}(\phi^{\rm lin}\cdot \nabla \eta) ](\cdot,s) \, ds \right\|_{L^p(\Omega)} \\
    &\quad \les 
    \int_0^t 
    \| [\phi^{\rm lin} \Delta\eta] (\cdot,s)\|_{L^p(\Omega)}
    +
    (t-s)^{-\frac{1}{2}}\| \phi^{\rm lin}\otimes \nabla \eta (\cdot,s)\|_{L^p(\Omega)} \\
    &\quad\quad\quad + \| \phi^{\rm lin}(\cdot,s)\|_{L^\infty(\supp \nabla \eta)}^2
    \, ds
    \\
    &\quad \les \int_0^t  s^{a+\kappa-\frac{1}{r}} + (t-s)^{-\frac{1}{2}}  s^{a+\kappa-\frac{1}{r}} + s^{2(a + \kappa - \frac1r)} \, ds \\
    &\quad \les t^{a+\kappa-\frac{1}{r} + \frac{1}{2}} \, .
    \end{aligned}   
\end{equation}
{To estimate the pressure terms, we observe from~\eqref{eq:weightedlppressureest} that
\begin{equation}
\| (-\Delta)^{-1} \div \div (N \Phi^{\rm lin} \otimes \Phi^{\rm lin})(\cdot,s) \|_{L^p(\supp \nabla N)} \lesssim s^{2a + \frac{1}{2}(3-\frac{3}{p})} \, .
\end{equation}
Therefore, after changing variables,
\begin{equation}
	\label{eq:pressureestgo}
	\begin{aligned}
	&\left \| \int_0^t e^{(t-s) A} \bP [ (-\Delta)^{-1} \div \div (\eta \phi^{\rm lin} \otimes \phi^{\rm lin}) (\cdot,s) \nabla \eta ] \, ds \right \|_{L^p(\Omega)} \\
	&\quad \lesssim \int_0^t s^{2a-1+\kappa - \frac{1}{r}} \, ds \\
	&\quad \lesssim t^{2a+\kappa-\frac{1}{r}} \, .
	\end{aligned}
\end{equation} }

Notice that the $-1/r$ factors in~\eqref{eq:gootherterms} and~\eqref{eq:pressureestgo} will drop after applying~\eqref{eq:holderforf}. Combining~\eqref{eq:divterm1} and~\eqref{eq:gootherterms}, We conclude that for all $t \in (0,\bar{t})$,
\begin{equation}
    \label{eq:Goestfirst}
    \| G_o \|_{L^r_t L^p_x(\Omega \times (0,t))} \les t^{\kappa+a} \, . 
\end{equation}
Hence, \eqref{eq:betadef} gives
\begin{equation}
    \label{eq:Goestwithbeta}
    \| G_o \|_{Y^\beta_{\bar{t}}} 
    \les \bar{t}^{\kappa + a - \beta}
    = \bar{t}^{\frac{1}{8}} \, .
\end{equation}

\subsection{Estimate on $L_o$~\eqref{eq:Lodef}}

The terms in the first line of~\eqref{eq:Lodef} are estimated similarly to the $G_o$ estimate except that $a$ is replaced by $\alpha$ and $\Phi^{\rm lin}$ by $\Phi^{\rm per}$. 
We employ either Lemma~\ref{lem:stokessemigroup} or \eqref{eq:Stokes est torus} to estimate the remaining non-pressure terms:
\begin{equation}
    \begin{split}
       & \left\|  \int_0^t  e^{(t-s) A} \bP [ (\psi \cdot \nabla \eta) \phi^{\rm lin}](\cdot,s) \, ds \right\|_{L^p(\Omega)}
        \\ 
        &\quad  \lesssim
        \int_0^t  \| \psi(\cdot, s)\|_{L^p} \| \phi^{\rm lin}(\cdot, s) \|_{L^\infty(\supp \nabla \eta)}\, ds
        \\
        & \quad \lesssim 
        t^{\frac{3}{2} + a + 1 - \frac{1}{r} + \beta} \| \psi \|_{Y^\beta_{\bar t}}
        \\
        & \quad \lesssim
        t^{\kappa + a + \beta + 1}  \| \psi \|_{Y^\beta_{\bar t}}
        \, ,
    \end{split}
    \end{equation}
	and
	\begin{equation}
		\begin{split}
			&\left\| \int_0^t e^{(t-s) A} \bP[ (\phi^{\rm lin}\cdot \nabla \eta)(\eta \phi^{\rm per} + \psi) +  ((\eta\phi^{\rm per} + \psi)\cdot \nabla \eta) \phi^{\rm lin}](\cdot,s) \, ds \right\|_{L^p(\Omega)}
			\\&
			\les \int_0^t \| \phi^{\rm lin}(\cdot, s)|_A \|_{L^\infty(\Omega)}( \| \phi^{\rm per}(\cdot, s) \|_{L^p(\supp \nabla \eta)} + \| \psi(\cdot,s) \|_{L^p(\Omega)})\, ds
			\\&
			\les \int_0^t s^{\frac{3}{2} + a} (   s^{\frac{3}{2} + \alpha} \| \Phi^{\rm per}\|_{X^\alpha_{\bar \tau}}  + \| \psi(\cdot,s) \|_{L^p(\Omega)} )\, ds
			\\&
			\les t^{3+a+\alpha}
			\| \Phi^{\rm per}\|_{X^\alpha_{\bar \tau}} + t^{\frac{3}{2} + a + \beta + 1- \frac{1}{r}} \| \psi \|_{Y^\beta_{\bar t}}\, .
		\end{split}
	\end{equation}
{For the pressure terms, whenever $s \leq \bar{\tau}$, we have
\begin{equation}
	\label{eq:firstlinearpressureest}
	\begin{split}	
\| (-\Delta)^{-1} \div \div ( N \Phi^{\rm lin} \otimes \Phi^{\rm per}& + N \Phi^{\rm per} \otimes \Phi^{\rm lin} )(\cdot,s) \|_{L^p(\supp \nabla N)} 
\\ & 
\lesssim s^{a+\alpha+\frac{1}{2}(3-\frac{3}{p})} \| \Phi^{\rm per} \|_{X^\alpha_{\bar{\tau}}} \, ,
   \end{split}
\end{equation}
\begin{equation}
	\label{eq:onepointfivelinearpressureest}
	\begin{split}
	\| (-\Delta)^{-1} \div \div (\bar{U} \otimes \Phi^{\rm per} & + \Phi^{\rm per} \otimes \bar{U})(\cdot,s) \|_{L^p(\supp \nabla N)} \\
	&\lesssim s^{\alpha+\frac{1}{2}(3-\frac{3}{p})} \| \Phi^{\rm per} \|_{X^\alpha_{\bar{\tau}}} \, ,
	\end{split}
\end{equation}
\begin{equation}
	\label{eq:secondlinearpressureest}
	\begin{split}
		\| (-\Delta)^{-1} \div \div (\bar{U} \otimes \Psi + & \Psi \otimes \bar{U})(\cdot,s) \|_{L^p(\supp \nabla N)} 
		\\&
		\lesssim s^{\frac{1}{2}(3-\frac{3}{p})} \| \Psi(\cdot,s) \|_{L^p(\supp \bar{U})} \, ,
	\end{split}
\end{equation}
\begin{equation}
	\label{eq:thirdlinearpressureest}
	\begin{split}
		\| (-\Delta)^{-1} \div \div ( \tilde{N} \Phi^{\rm lin} \otimes \Psi +& \tilde{N} \Psi \otimes \Phi^{\rm lin})(\cdot,s)  \|_{L^p(\supp \nabla N)} 
		\\&
		\lesssim s^{a+\frac{1}{2}(3-\frac{3}{p})} \| \tilde{N} \Psi(\cdot,s) \|_{L^p} \, .
	\end{split}
\end{equation}
The terms~\eqref{eq:firstlinearpressureest} and~\eqref{eq:onepointfivelinearpressureest} lead to an estimate similar to the pressure term in $G_o$ but with powers $\bar{t}^{a+\alpha+\kappa}$ and $\bar{t}^{\alpha+\kappa}$ when measured in $L^r_t L^p_x(\Omega \times (0,\bar{t}))$. For the term~\eqref{eq:secondlinearpressureest}, we have
\begin{equation}
\begin{aligned}
&\left\| \int_0^t e^{(t-s)A} \bP [ (-\Delta)^{-1} \div \div (\bar{u} \otimes \psi + \psi \otimes \bar{u})(\cdot,s) \nabla \eta ] \, ds \right\|_{L^p(\Omega)} \\
&\quad \lesssim \int_0^t s^{-1+\kappa-\frac{1}{r}} \| \psi(\cdot,s) \|_{L^p(\Omega)} \, ds \\
&\quad \lesssim t^{\kappa-\frac{2}{r}+\beta} \| \psi \|_{Y^\beta_{\bar{t}}} \, .
\end{aligned}
\end{equation}
The contribution of the term~\eqref{eq:thirdlinearpressureest} is similar but with exponent $t^{a+\kappa-\frac{2}{r}+\beta}$.
}

We conclude that
\begin{equation}
    \label{eq:Loestwithbeta}
    \| L_o[(\Phi^{\rm per},\psi)] \|_{Y^\beta_{\bar{t}}} 
    \les \bar{t}^{\kappa + \alpha - \beta} \| \Phi^{\rm per} \|_{X^\alpha_{\bar{\tau}}} 
     + \bar t ^{\kappa -\frac{2}{r} } \| \psi\|_{Y_{\bar t}^\beta} 
    \lesssim
    \bar t^{\frac{1}{2}} \| (\Phi^{\rm per}, \psi) \|_{Z^{\alpha,\beta}_{\bar t}} \, . 
\end{equation}

\subsection{Estimate on $B_o$~\eqref{eq:Bodef}}
By the semigroup estimates in Lemma~\ref{lem:stokessemigroup} (or \eqref{eq:Stokes est torus}, in the periodic setting), for all $t \in (0,\bar{t})$, we have
\begin{equation}
\label{eq:Bo1term}
\begin{aligned}
    &\left\| \int_0^t e^{(t-s)A} \bP \div [\psi_1 \otimes \psi_2](\cdot,s) \, ds \right\|_{L^p(\Omega)} \\
    &\quad \les \int_0^t (t-s)^{-\frac{1}{2}-\frac{3}{2p}} \| \psi_1(\cdot,s)\|_{L^p} \| \psi_2 (\cdot,s)\|_{L^p}  \, ds \\
    &\quad\les \left( \int_0^t  (t-s)^{(-\frac{1}{2}-\frac{3}{2p})(2r)'} \, ds \right)^{\frac{1}{(2r)'}} \| \psi_1 \|_{L^r_t L^p_x(\Omega \times (0,t))}  \| \psi_2 \|_{L^r_t L^p_x(\Omega \times (0,t))} \\
    &\quad \les t^{2\beta} \| \psi_1 \|_{Y^\beta_{\bar{t}}} \| \psi_2 \|_{Y^\beta_{\bar{t}}}  \, ,
    \end{aligned}
\end{equation}
where we choose $p,r \gg 1$ such that the first term is time integrable.
Moreover,
\begin{equation}
 \begin{split}
 	& \left|   \int_0^t e^{(t-s)A}\mathbb{P}[\eta \phi_1^{\rm per}(\phi_2^{\rm per}\cdot \nabla \eta)(\cdot,s)]\, ds
 	\right|_{L^p(\Omega)}
 	\\& 
 	\les \int_0^t \| \phi_1^{\rm per}(\cdot,s) \|_{L^p(\supp \nabla \eta)}  \| \phi_2^{\rm per}(\cdot,s) \|_{L^\infty(\supp \nabla \eta)}\, ds
 	\\&
 	\les t^{4 + 2\alpha} \| \Phi_1^{\rm per}\|_{X^\alpha_{\bar \tau}} \| \Phi_2^{\rm per}\|_{X^\alpha_{\bar \tau}}\, ,
 \end{split}
\end{equation}
and 
\begin{equation}
\label{eq:Bo2term}
\begin{aligned}
    &\left\| \int_0^t e^{(t-s)A} \bP [(\psi_1 \cdot \nabla \eta)\phi_2^{\rm per} + (\phi_1^{\rm per}\cdot \nabla \eta)\psi_2 ](\cdot,s) \, ds \right\|_{L^p(\Omega)} \\
    & \les \int_0^t  s^{\kappa - \frac{1}{r}} (\| \psi_1 \|_{L^p} \| \Phi_2^{\rm per} \|_{L^\infty_w} + \| \psi_2 \|_{L^p} \| \Phi_1^{\rm per} \|_{L^\infty_w} ) \, ds \\
    & \les \left( \int_0^t  s^{(\kappa -\frac{1}{r} + \alpha)r'}  \, ds \right)^{\frac{1}{r'}} 
    ( \| \psi_1 \|_{L^r_t L^p_x(\Omega \times (0,t))}  \| \Phi_2^{\rm per} \|_{X^\alpha_{\bar{\tau}}} 
    + \| \psi_2 \|_{L^r_t L^p_x(\Omega \times (0,t))}  \| \Phi_1^{\rm per} \|_{X^\alpha_{\bar{\tau}}}
     ) \\
    & \les t^{\kappa + \alpha+\beta}
    ( \| \psi_1 \|_{Y^\beta_{\bar{t}}} \| \Phi_2^{\rm per} \|_{X^\alpha_{\bar{\tau}}} + \| \psi_2 \|_{Y^\beta_{\bar{t}}} \| \Phi_1^{\rm per} \|_{X^\alpha_{\bar{\tau}}} )  \, .
    \end{aligned}
\end{equation}
{Finally, the pressure terms are estimated similarly to the $G_o$ term and the term~\eqref{eq:thirdlinearpressureest} except with $\Phi^{\rm per}$ replacing $\Phi^{\rm lin}$.}

Combining~\eqref{eq:Bo1term} and~\eqref{eq:Bo2term} with~\eqref{eq:holderforf} (also, $\alpha \geq \beta$), we have
\begin{equation}
\label{eq:Boestwithbeta}
    \| B_o[(\Phi^{\rm per}_1,\psi_1),(\Phi^{\rm per}_2,\psi_2)] \|_{Y^{\beta}_{\bar{t}}} \les \bar{t}^{\beta} \| (\Phi^{\rm per}_1,\psi_1) \|_{Z^{\alpha,\beta}_{\bar{t}}} \| (\Phi^{\rm per}_2,\psi_2) \|_{Z^{\alpha,\beta}_{\bar{t}}} \, .
\end{equation}

\section{Inner estimates}
We now turn to the inner estimates, for which our main tool is Lemma~\ref{lemma:spacialdecay}.

\subsection{Estimate on $G_i$~\eqref{eq:gidef},~\eqref{eq:Gi}}
For all $\tau \in (-\infty,\bar{\tau})$, we have (with $\delta = a/2$, in Lemma~\ref{lemma:spacialdecay}),
\begin{equation}
\begin{aligned}
    \| G_i(\cdot,\tau) \|_{L^\infty_w} &= \left\| \int_{-\infty}^\tau e^{(\tau-s) \L_{\rm ss}} \bP \div \G(\cdot,s) \, ds \right\|_{L^\infty_w} \\
    &\les \int_{-\infty}^\tau e^{(\tau-s)(\frac{3a}{2})} (\tau-s)^{-\frac{1}{2}} e^{2as} \, ds \les e^{2a\tau} \, ,
    \end{aligned}
\end{equation}
that is,
\begin{equation}
\label{eq:Giestwithalpha}
    \| G_i \|_{X^\alpha_{\bar{\tau}}} \les e^{(2a-\alpha) \bar{\tau}} \, .
\end{equation}
Notice that $2a - \alpha = a - \kappa \ge 1$ provided $r\gg 1$ and $a\ge 5$.

\subsection{Estimate on $B_i$~\eqref{eq:bidef},~\eqref{eq:Bi}}
The estimate for the $\B_1$ terms is analogous to the $G_i$ estimate. For all $\tau \in (-\infty,\bar{\tau})$, we have
\begin{equation}
   \left\| \int_{-\infty}^\tau e^{(\tau-s) \L_{\rm ss}} \bP \div \B_1[\Phi^{\rm per}_1,\Phi^{\rm per}_2](\cdot,s) \, ds \right\|_{L^\infty_w} \les e^{2\alpha \tau} \| \Phi_1^{\rm per} \|_{X^{\alpha}_{\bar{\tau}}} \| \Phi_2^{\rm per} \|_{X^{\alpha}_{\bar{\tau}}} \, .
\end{equation}
For the $\B_2$ terms, we apply Lemma \ref{lemma:spacialdecay} and  \eqref{eq:weightoninside} to get 
\begin{equation}
\begin{aligned}
    &\left\| \int_{-\infty}^\tau e^{(\tau-s) \L_{\rm ss}} \bP \div \B_2[(\Phi^{\rm per}_1,\psi_1),(\Phi^{\rm per}_2,\psi_2)](\cdot,s) \, ds \right\|_{L^\infty_w} \\
    &\quad \les_\delta \int_{-\infty}^{\tau} e^{(\tau-s)(a+\delta)} (\tau-s)^{-\frac{1}{2}-\frac{3}{2p}}\| \Phi_1^{\rm per}(\cdot,s)  \|_{L^\infty_w}  \| \Psi_2\tilde N(\cdot,s)  \|_{L^p} \, ds \\
    &\quad 
    \les_\delta \int_{-\infty}^\tau e^{(\tau-s)(a+\delta)} (\tau-s)^{-\frac{1}{2}-\frac{3}{2p}} e^{s\alpha} e^{(\beta'+\frac{1}{2}-\frac{3}{2p}-\frac{1}{r})s}  \\
    &\quad\quad\quad\quad\quad\quad \times \| e^{(-\beta'-\frac{1}{2}+\frac{3}{2p}+\frac{1}{r})s} \Psi_2 \tilde N (\cdot,s) \|_{L^p} \, ds  \;  \| \Phi_1^{\rm per} \|_{X^\alpha_{\bar{\tau}}}  \\
    &\quad \overset{\eqref{eq:weightoninside}}{\les_{\delta,\beta'}} e^{\tau(\alpha+\beta'+\frac{1}{2}-\frac{3}{2p}-\frac{1}{r})}  \| \Phi_1^{\rm per} \|_{X^\alpha_{\bar{\tau}}}  \| \Psi_2 \|_{Y^\beta_{\bar{t}}} \\
    &\quad \les_{\delta,\beta'} e^{(\alpha+\beta) \tau} \| \Phi_1^{\rm per} \|_{X^\alpha_{\bar{\tau}}}  \| \Psi_2 \|_{Y^\beta_{\bar{t}}} 
    \end{aligned}
\end{equation}
where $\beta'+1/2-3/(2p)-1/r=\beta$ and $\delta = \beta/2$. Combining the above two estimates, we conclude
\begin{equation}
    \label{eq:Biestwithalpha}
    \| B_i[(\Phi^{\rm per}_1,\psi_1),(\Phi^{\rm per}_2,\psi_2)] \|_{X^\alpha_{\bar{\tau}}} \les e^{\beta \bar{\tau}} \| (\Phi^{\rm per}_1,\psi_1) \|_{Z^{\alpha,\beta}_{\bar{t}}} \| (\Phi^{\rm per}_2,\psi_2) \|_{Z^{\alpha,\beta}_{\bar{t}}} \, .
\end{equation}

\subsection{Estimate on $L_i$~\eqref{eq:lidef},~\eqref{eq:Li}}

The estimate for the $\L_1$ terms is analogous to the $G_i$ and $\B_1$ estimates. For all $\tau \in (0,\bar{\tau})$, we have
\begin{equation}
    \left\| \int_{-\infty}^\tau e^{(\tau-s) \L_{\rm ss}} \bP \div \L_1[(\Phi^{\rm per},\psi)](\cdot,s) \, ds \right\|_{L^\infty_w} \les e^{\tau(a+\alpha)} \| \Phi^{\rm per} \|_{X^\alpha_{\bar{\tau}}} \, .
\end{equation}
The estimates for the $\L_2$ terms is analogous to the $\B_2$ estimate:
\begin{equation}
    \left\| \int_{-\infty}^\tau e^{(\tau-s) \L_{\rm ss}} \bP \div \L_2[(\Phi^{\rm per},\psi)](\cdot,s) \, ds \right\|_{L^\infty_w} \les e^{(a+\beta) \tau}  \| \Psi_2 \|_{Y^\beta_{\bar{t}}} \, .
\end{equation}
Finally, we have
\begin{equation}
\begin{aligned}
    &\left\| \int_{-\infty}^\tau e^{(\tau-s) \L_{\rm ss}} \bP \div \L_3[(\Phi^{\rm per},\psi)](\cdot,s) \, ds \right\|_{L^\infty_w} \\
    &\quad\les_\delta \int_{-\infty}^\tau e^{(\tau-s) (a+\delta)} (\tau-s)^{-\frac{1}{2}-\frac{3}{2p}} \| \bar{U} \|_{L^\infty_w} \| \Psi \|_{L^p} \, ds \\
    &\quad \les_{\delta,\beta'} e^{(\beta'+\frac{1}{2}-\frac{3}{2p}-\frac{1}{r})\tau } \| \psi \|_{Y^\beta_{\bar{t}}}\\
    &\quad\les_{\delta,\beta'} e^{(\beta+\frac{1}{4})\tau} \| \psi \|_{Y^\beta_{\bar{t}}} \, , 
    \end{aligned}
\end{equation}
where $\beta' + 1/4 - 3/(2p) - 1/r = \beta$ and $\delta = (a+\beta)/2 - a$. Combining the above three estimates and $a\geq 10$, we have
\begin{equation}
\label{eq:Liestwithalpha}
    \| L_i[(\Phi^{\rm per},\psi)] \|_{X^\alpha_{\bar{\tau}}} \les e^{(\beta+\frac{1}{4}-\alpha) \bar{\tau}}\| (\Phi^{\rm per},\psi) \|_{Z^{\alpha,\beta}_{\bar{t}}} \overset{\eqref{eq:betadef}}{\les} e^{\frac{1}{8} \bar{\tau}}\| (\Phi^{\rm per},\psi) \|_{Z^{\alpha,\beta}_{\bar{t}}}  \, .
\end{equation}

\section{Conclusion}
\label{sec:conclusion}

We now collect the estimates \eqref{eq:Goestwithbeta}, \eqref{eq:Loestwithbeta}, \eqref{eq:Boestwithbeta}, \eqref{eq:Giestwithalpha}, \eqref{eq:Biestwithalpha}, \eqref{eq:Liestwithalpha}, which yield that
\begin{equation}
    \| L \|_{Z^{\alpha,\beta}_{\bar t} \to Z^{\alpha,\beta}_{\bar t}} 
    + \| B \|_{Z^{\alpha,\beta}_{\bar t} \times Z^{\alpha,\beta}_{\bar t} \to Z^{\alpha,\beta}_{\bar t}} 
    + \| G \|_{Z^{\alpha,\beta}_{\bar t}} \to 0 \text{ as } \bar{t} \to 0^+ \, ,
\end{equation}
with the appropriate choices of $\alpha$ and $\beta$ in~\eqref{eq:betadef}, $p,r \gg 1$, and $a \geq 10$. In particular, there exists $\bar{t} \ll 1$ such that
\begin{equation}
    L + B + G : 
    \{\| (\Phi^{\rm per}, \psi)\|_{Z^{\alpha,\beta}_{\bar t}}\le 1\}
    \to 
    \{ \| (\Phi^{\rm per}, \psi)\|_{Z^{\alpha,\beta}_{\bar t}}\le 1\}
\end{equation}
is a contraction, cf. \cite[Subsection 4.2.2]{albritton2021non}.  Hence, there exists a unique solution $(\Phi^{\rm per},\psi)$ to the integral equation~\eqref{eq:integral eq} in the above ball. By the ansatz~\eqref{eq:ansatz} and decomposition~\eqref{eq:decomp}, the solution $(\Phi^{\rm per},\psi)$ determines a mild Navier-Stokes solution $u \: \Omega \times (0,\bar{t}) \to \R^3$ with forcing $f$ and satisfying
\begin{equation}
    u \in L^r_t L^p_x(\Omega \times (\varepsilon,\bar{t})) \, ,
\end{equation}
for all $\varepsilon \in (0,\bar{t})$. 

That the solution is indeed mild is a technical point, which we now justify.
Initially, we know that, for all divergence-free $w \in C^1_c((0,T);C^2\cap C_0(\Omega))$, we have
\begin{equation}
    \label{eq:theequationieneed}
    \int_0^{\bar{t}} \int_\Omega u(-\p_t w - \Delta w) \, dx \, dt = \int_0^{\bar{t}} \int_\Omega u \otimes u : \nabla w + f \cdot w \, dx \,dt \, ,
\end{equation}
and $u(\cdot,t) \in L^p_\sigma(\Omega)$ for a.e. $t \in (0,\bar{t})$. In particular, $u = \bP u$, and it is weakly continuous in $(0,\bar{t})$ due to~\eqref{eq:theequationieneed}. Consider $\varepsilon \in (0,\bar{t})$ such that $u(\cdot,\varepsilon) \in L^p_\sigma(\Omega)$. Let $v$ be the mild solution to the Stokes equations on $\Omega \times (\varepsilon,\bar{t})$ with initial data $v(\cdot,\varepsilon) = u(\cdot,\varepsilon)$ and right-hand side $- \div u \otimes u + f$. Then $u - v$ is a very weak solution in the sense of Lemma~\ref{lem:stokesdiv} with zero initial data, zero right-hand side, and zero divergence. By uniqueness, $u \equiv v$ on $\Omega \times (\varepsilon,\bar{t})$.

We begin by justifying that $u \neq \bar{u}$, which is necessary for non-uniqueness. Recall that $\| \Phi^{\rm lin}N(\cdot,\tau)\|_{L^p} \ges e^{\tau a}$ and $\| \Phi^{\rm per}(\cdot,\tau) \|_{L^\infty_w} \les e^{\tau \alpha}$ for all sufficiently negative $\tau$. Additionally, due to~\eqref{eq:weightoninside}, we have that, for all $\beta' < \beta$, $\| \Psi(\cdot,\tau_k) \|_{L^p} \les e^{\tau_k(\beta'-\frac{1}{2}+\frac{3}{2p}-\frac{1}{r})}$ along a sequence $\tau_k \to -\infty$; in particular, the exponent on the right-hand side can be made strictly greater than $a$. Hence, $\| \Phi N(\cdot,\tau_k) + \Psi(\cdot,\tau_k)\|_{L^p} \ges e^{\tau_k a}$ for large enough $k$, which justifies the claim.

We now justify that the above solution is a Leray-Hopf solution with right-hand side. Since $L^r_t L^p_x(\Omega \times (\varepsilon,\bar{t}))$ is a subcritical space when $2/r + 3/p < 1$ and $f$ is smooth away from $t=0$, it is classical that $u \in L^\infty_t (W^{1,q}_0)_x(\Omega \times (\varepsilon,\bar{t}))$ for all $q \in (1,+\infty)$ (bootstrap using the mild formulation and the linear estimates in Lemma~\ref{lem:stokessemigroup}) and, moreover, satisfies energy equality on $\Omega \times (\varepsilon,\bar{t})$, for all $\varepsilon \in (0,\bar{t})$ (see~\cite[Theorem 1.4.1, p. 272]{Sohrbook}, for example). 
It remains to show that $\| u(\cdot,t_k) \|_{L^2} \to 0$ as $k \to +\infty$ for some sequence of times $t_k \to 0^+$. We have $\| \bar{u}(\cdot,t) \|_{L^2} + \| \phi \eta(\cdot,t) \|_{L^2} \les t^{1/4}$,
 and $\| \psi(\cdot,t_k) \|_{L^2} \to 0$ follows from~\eqref{eq:tminusbetathing}. This completes the proof of Theorem~\ref{thm:introthm}.

\subsubsection*{Acknowledgments} DA was supported by NSF Postdoctoral Fellowship  Grant No.\ 2002023 and Simons Foundation Grant No.\ 816048.
EB was supported by Giorgio and Elena Petronio Fellowship.
MC was supported by the SNSF Grant 182565.

\section{Appendix}

\begin{lemma}[A convolution inequality]
    \label{lem:convolutioninequality}
Let $d \in \N$, $\alpha, \beta \in (d,+\infty)$ and $\delta \in (0,1]$. Then
\begin{equation}
   I_{d,\alpha,\beta}(\delta) := \int_{\R^d} \la x-y \ra^{-\alpha} \left\la \frac{y}{\delta} \right\ra^{-\beta} \, dy \les_{d,\alpha,\beta} \la x \ra^{-\min(\alpha,\beta)} \delta^d \, .
\end{equation}
\end{lemma}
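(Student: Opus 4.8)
The plan is a region decomposition of $\R^d$ adapted to the two scales in the integrand: the unit scale of $\la x-y\ra^{-\alpha}$, peaked at $y=x$, and the scale $\delta$ of $\la y/\delta\ra^{-\beta}$, peaked at $y=0$. Throughout, write $m:=\min(\alpha,\beta)$ and recall $\alpha,\beta>d$, $\delta\le 1$. I will use the elementary facts that, for $\gamma>d$ and $0<R_1<R_2$,
\[
\int_{|v|\le R}\la v\ra^{-\gamma}\,dv \les_{\gamma,d}\min(R^d,1),\qquad \int_{|v|>R}\la v\ra^{-\gamma}\,dv\les_{\gamma,d}\max(R,1)^{d-\gamma},\qquad \int_{R_1<|v|<R_2}|v|^{-\gamma}\,dv\les_{\gamma,d}R_1^{d-\gamma},
\]
the first by comparison with $|B_R|$ and with $\|\la\cdot\ra^{-\gamma}\|_{L^1}$, the others by polar coordinates using $\gamma>d$. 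Split $\R^d=E_1\cup E_2\cup E_3$ with $E_1=\{|y|\le|x|/2\}$, $E_2=\{|x|/2<|y|\le\delta\}$, $E_3=\{|y|>\max(|x|/2,\delta)\}$.

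On $E_1$ one has $|x-y|\ge|x|/2$, hence $\la x-y\ra\gtrsim\la x\ra$, so
\[
\int_{E_1}\la x-y\ra^{-\alpha}\la y/\delta\ra^{-\beta}\,dy\les\la x\ra^{-\alpha}\int_{\R^d}\la y/\delta\ra^{-\beta}\,dy=\la x\ra^{-\alpha}\,\delta^d\|\la\cdot\ra^{-\beta}\|_{L^1}\les\la x\ra^{-\alpha}\delta^d\le\la x\ra^{-m}\delta^d,
\]
using $\beta>d$. The region $E_2$ is nonempty only when $|x|<2\delta$, in which case $\la x\ra\sim 1$; there $\la y/\delta\ra^{-\beta}\le 1$ and $|x-y|\le|x|+|y|\le 3\delta$, so
\[
\int_{E_2}\la x-y\ra^{-\alpha}\la y/\delta\ra^{-\beta}\,dy\le\int_{|x-y|\le 3\delta}\la x-y\ra^{-\alpha}\,dy\les\min((3\delta)^d,1)\les\delta^d\sim\delta^d\la x\ra^{-m}.
\]

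On $E_3$ we have $|y|>\delta$, hence $\la y/\delta\ra\ge|y|/\delta$ and $\la y/\delta\ra^{-\beta}\le\delta^\beta|y|^{-\beta}$, reducing the task to bounding $\delta^\beta K$ with $K:=\int_{|y|>\rho}\la x-y\ra^{-\alpha}|y|^{-\beta}\,dy$ and $\rho:=\max(|x|/2,\delta)$. Splitting at $|y|=2|x|$: on $\{\rho<|y|\le 2|x|\}$ one has $|y|\sim|x|$ and $|x-y|\le 3|x|$, giving a contribution $\les|x|^{-\beta}\int_{|v|\le 3|x|}\la v\ra^{-\alpha}\,dv\les|x|^{-\beta}\min(|x|^d,1)$; on $\{|y|>2|x|\}$ one has $|x-y|\ge|y|/2$, hence $\la x-y\ra\gtrsim\la y\ra$, and the elementary facts (also using $|y|>\rho\ge\delta$ where needed) give a contribution $\les\max(|x|,1)^{d-\alpha-\beta}+|x|^{d-\beta}\mathbf{1}_{|x|<1/2}$. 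The remaining step is bookkeeping: inserting these into $\delta^\beta K$ and distinguishing $|x|\ge 1$ from $|x|<1$, one verifies the claimed bound. The nontrivial inputs are that $\delta\le 1$ with $\alpha,\beta>d$ makes powers such as $\delta^{\beta-d}$ and $\delta^\beta\delta^{-d}$ at most $1$; that $\beta\ge m$; and — crucially, since we are on $E_3$ — that $|x|>\delta$ whenever $\rho=|x|/2$, which yields $\delta^{\beta-d}\le|x|^{\beta-d}$ and closes the small-$|x|$ case.

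The main obstacle is not any individual estimate but keeping the case analysis organized, because the factor $\delta^d$ is produced by a \emph{different} mechanism in each region: from the $L^1$ mass of $\la\cdot/\delta\ra^{-\beta}$ on $E_1$; from the volume of a ball of radius $\sim\delta$ on $E_2$; and from the pointwise bound $\la y/\delta\ra^{-\beta}\le\delta^\beta|y|^{-\beta}$ together with $|y|>\delta$ on $E_3$. All three must be matched against the single target $\la x\ra^{-m}\delta^d$, which is what forces the splitting thresholds at $|x|/2$ and at $\delta$ (and the auxiliary threshold $|x|\sim 1$ at the final arithmetic stage).
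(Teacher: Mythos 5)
Your argument is correct and follows essentially the same route as the paper's: a region decomposition of the $y$-integral according to which of the two brackets is essentially constant (your $E_1$ is the paper's first region, and your split of $E_3$ at $|y|=2|x|$ plays the role of the paper's near-diagonal and far-field regions), the paper simply dispatching $|x|\le 1$ at the outset instead of threading the small-$|x|$ cases through the regions. The one slip is the displayed intermediate bound $|x|^{d-\beta}\mathbf{1}_{|x|<1/2}$ for the portion of $E_3$ with $\max(2|x|,\delta)<|y|\le 1$, which diverges as $|x|\to 0$ and should read $\max(2|x|,\delta)^{d-\beta}$; your parenthetical about using $|y|>\rho\ge\delta$ shows you know the fix, and with it the final bookkeeping closes.
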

\begin{proof}
We will suppress dependence on $d,\alpha,\beta$ when convenient.

For $|x| \leq 1$, we have
\begin{equation}
    I \les \int_{\R^d} \left\la \frac{y}{\delta} \right\ra^{-\beta} \, dy \les \delta^d \, ,
\end{equation}
so we restrict our attention to $|x| \geq 1$.

In the region $R_1 := \{ |y| \leq |x|/2 \}$, we have $|x-y| \approx |x|$ and
\begin{equation}
\int_{R_1} \la x-y \ra^{-\alpha} \left\la \frac{y}{\delta} \right\ra^{-\beta} \, dy \les \la x \ra^{-\alpha} \int_{|y| \leq |x|/2}  \left\la \frac{y}{\delta} \right\ra^{-\beta} \, dy  \les \la x \ra^{-\alpha} \delta^d \, .
\end{equation}

In the region $R_2 := \{ |x-y| \leq |x|/2 \}$, we have $|y| \approx |x|$ and
\begin{equation}
\int_{R_2} \la x-y \ra^{-\alpha} \left\la \frac{y}{\delta} \right\ra^{-\beta} \, dy  \les \left\la \frac{x}{\delta} \right\ra^{-\beta} \int_{|x-y| \leq |x|/2} \la x-y \ra^{-\alpha} \, dy  \les \left\la x \right\ra^{-\beta} \delta^\beta \, ,
\end{equation}
where  $|x| \geq 1$ and $\delta \in (0,1]$ ensure that $\la x/\delta \ra \approx \la x \ra \delta$.

In the region $R_3 := \R^d \setminus (R_1 \cup R_2)$, we have $|y| \approx |x-y|$ and
\begin{equation}
\int_{R_3} \la x-y \ra^{-\alpha} \left\la \frac{y}{\delta} \right\ra^{-\beta} \, dy  \les \int_{r \geq |x|/2} r^{-\alpha} r^{-\beta} \delta^{\beta} r^{d-1} \, dr \les \la x \ra^{-\alpha-\beta+d} \delta^\beta \, ,
\end{equation}
where we again use that $|x| \geq 1$ and $\delta \in (0,1]$ to make simplifications.

Finally, we sum the above estimates to complete the proof when $|x| \geq 1$.
\end{proof}

\begin{remark}
    \label{rmk:convolutionremark}
As a consequence, we have the following variant, which is useful in the proof of Lemma~\ref{lemma:spacialdecay}. Let $\zeta, \beta > d$, $p \in [1,+\infty]$, and $p'$ be its H{\"o}lder conjugate. Then, for all $f \in L^p_\zeta$, we have
\begin{equation}
\begin{aligned}
    \int_{\R^d} |f(x-y)| \left\la \frac{y}{\delta} \right\ra^{-\beta} &\leq \| f \|_{L^p_\zeta} \times [ I_{d,\zeta p',\beta p'}(\delta) ]^{\frac{1}{p'}} \\
    &\les_{d,\alpha,\beta,p} \| f \|_{L^p_\zeta} \la x \ra^{-\min(\zeta,\beta)} \delta^{\frac{d}{p'}} \, .
    \end{aligned}
\end{equation}
with obvious adjustments when $p = +\infty$.
\end{remark}

\bibliographystyle{abbrv}
\bibliography{nonuniquenessbib}

\end{document}